\providecommand{\CC}{{\mathbb{C}}}
\providecommand{\RR}{{\mathbb{R}}}
\providecommand{\QQ}{{\mathbb{Q}}}
\providecommand{\ZZ}{{\mathbb{Z}}}
\providecommand{\SO}{{\mathrm{SO}(n)}}
\providecommand{\Spin}{{\mathrm{Spin}(n)}}
\providecommand{\Spinc}{{\mathrm{Spin}^c(n)}}
\providecommand{\FSO}{{\mathcal{F}_{\mathrm{SO}}}}
\providecommand{\GL}{{\mathrm{GL}(n,\RR)}}
\providecommand{\F}{{\mathcal{F}}}
\providecommand{\Ind}{{\mathrm{Index}}}
\providecommand{\Kpt}{{K_0(\cdot)}}
\providecommand{\so}{{\mathfrak{so}}}
\newtheorem{theorem}{Theorem}
\newtheorem{lemma}[theorem]{Lemma}
\newtheorem{corollary}[theorem]{Corollary}
\newtheorem{proposition}[theorem]{Proposition}
\theoremstyle{definition}
\newtheorem{definition}[theorem]{Definition}
\theoremstyle{remark} 
\newtheorem{remark}[theorem]{Remark}
\numberwithin{equation}{section}
\begin{document}

\title{$K$-homology and Fredholm Operators I: Dirac Operators}
\author{Paul F.\ Baum}
\address{The Pennsylvania State University, University Park, PA, 16802, USA}
\email{baum@math.psu.edu}
\author{Erik van Erp}
\address{Dartmouth College, 6188, Kemeny Hall, Hanover, New Hampshire, 03755, USA}\email{jhamvanerp@gmail.com}

\thanks{PFB was partially supported by NSF grant DMS-0701184}
\thanks{EvE was partially supported by NSF grant DMS-1100570}

\maketitle

\tableofcontents

\section{Introduction}

In this expository paper we prove the following special case of the Atiyah-Singer index theorem \cite{AS1}.
\begin{theorem}\label{thm}
Let $M$ be an even dimensional compact Spin$^c$ manifold without boundary with Dirac operator $D$.
If $E$ is a smooth $\CC$ vector bundle on $M$ then
\[ \Ind\,(D_E) = (\mathrm{ch}(E)\cup \mathrm{Td}(M))[M]\]
Here  $D_E$ is $D$ twisted by $E$, $\mathrm{ch}(E)$ is the Chern character of $E$,
$\mathrm{Td}(M)$ the Todd class of the Spin$^c$ vector bundle $TM$,
and $[M]$ is the fundamental cycle of  $M$.
\end{theorem}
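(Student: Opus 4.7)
My plan is to prove the theorem by reducing the computation to a model case on a sphere via an embedding, using $K$-theoretic machinery to transport the problem. The first step is to recognize that both sides of the asserted equality are additive in $E$, and hence define linear functionals on $K^0(M)$. The left side $[E] \mapsto \Ind(D_E)$ is the \emph{analytic index} functional, while the right side $[E] \mapsto (\mathrm{ch}(E)\cup\mathrm{Td}(M))[M]$ is the \emph{topological index} functional. The goal becomes showing these two functionals on $K^0(M)$ coincide.

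Next, I would embed $M$ into some even-dimensional sphere $S^{2N}$ via Whitney's theorem, taking $N$ large enough that the normal bundle inherits a Spin$^c$ structure. There are direct-image (Gysin) maps $i_!$ on both $K$-theory and rational cohomology, and the plan is to establish two compatibility statements. Analytically, I would show $\Ind(D_E) = \Ind(D'_{i_!E})$ by constructing a pushed-forward operator via the Thom class of the normal bundle and proving the Fredholm index is invariant under this construction. Topologically, I would invoke the Grothendieck--Riemann--Roch formula $i_*(\mathrm{ch}(E)\cup\mathrm{Td}(M)) = \mathrm{ch}(i_!E)\cup\mathrm{Td}(S^{2N})$. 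Together these reduce the theorem to the special case $M = S^{2N}$.

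On the sphere, Bott periodicity gives $K^0(S^{2N}) \cong \ZZ \oplus \ZZ$, generated by the trivial line bundle and the Bott class $\beta$. For the trivial bundle the formula reduces to $\Ind(D) = \mathrm{Td}(S^{2N})[S^{2N}] = 1$; the left side is verified by identifying the kernel of the Spin$^c$ Dirac operator on the round sphere (only the constant positive half-spinors contribute), and the right side is a standard characteristic-class computation. For the Bott class, an explicit calculation of a Dirac operator twisted by a generator of $K^0(S^{2N})$ matches the topological count up to sign, and the two functionals then agree on a basis.

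The principal difficulty lies in the analytic pushforward: constructing from $D_E$ on $M$ an elliptic operator on $S^{2N}$ whose Fredholm index equals $\Ind(D_E)$. The standard route is to twist by a family of harmonic-oscillator-type operators in the normal directions, realizing the Thom class of the normal bundle at the operator level; verifying that this construction preserves the analytic index requires either a deformation-of-symbols argument in $K$-theory or a direct analytic comparison using parametrices. Everything else is either a formal $K$-theoretic manipulation or a concrete computation on spheres.
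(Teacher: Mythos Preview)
Your overall strategy is sound and close in spirit to the classical Atiyah--Singer $K$-theory proof, but it differs in organization from the paper's argument, and there is one concrete error in your sphere calculation.

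First the error: for the \emph{trivial} line bundle on $S^{2N}$, both sides of the formula equal $0$, not $1$. Since $TS^{2N}$ is stably trivial, $\mathrm{Td}(S^{2N})=1$, and pairing the degree-zero class $\mathrm{ch}(\underline{\CC})=1$ against the fundamental class in top degree gives $0$. On the analytic side, the round sphere has positive scalar curvature, so by the Lichnerowicz formula the untwisted Spin Dirac operator has trivial kernel; there are no ``constant positive half-spinors'' (you may be thinking of $d+d^*$ or the Dolbeault operator). The trivial-bundle check is thus $0=0$, and the substantive case is the Bott bundle $\beta$, where both sides equal $1$. This does not break your outline, but it does mean the two generators are not on equal footing: only $\beta$ carries content.

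Now the comparison. The paper does not work with functionals on $K^0(M)$ and $K$-theoretic Gysin maps. Instead it builds a geometric $K$-homology group $K_0(\cdot)$ out of pairs $(M,E)$ modulo three moves --- bordism, direct sum--disjoint union, and vector bundle modification --- and shows that both the analytic and topological indices descend to homomorphisms on $K_0(\cdot)$. The reduction to a sphere is achieved by an explicit sequence of these moves (the sphere lemma), after which Bott periodicity forces $(M,E)\sim(S^{2r},q\beta)$. The analytic index is then shown to be an \emph{isomorphism} $K_0(\cdot)\cong\ZZ$, so checking the single example $(S^{2r},\beta)$ finishes the proof.

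The two routes are cousins: your Gysin pushforward $i_!$ corresponds to vector bundle modification (Thom isomorphism in the normal bundle), and the excision implicit in extending from a tubular neighborhood to the ambient sphere corresponds to bordism invariance. What your route buys is that it stays within familiar $K$-theory machinery; what the paper's route buys is that your ``principal difficulty'' --- constructing an operator on $S^{2N}$ with the same index as $D_E$ --- is never faced directly. It is replaced by two separate invariance checks: bordism invariance of the analytic index (via Higson's argument with the Cayley transform on a cylindrical end) and invariance under vector bundle modification (via a sharp-product decomposition $D_E\#\mathfrak{D}_\beta$ using the fiberwise Dirac operator). Neither requires building a global operator on the ambient sphere or a harmonic-oscillator model in the normal directions.
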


This expository paper is the first of three.
In the present paper we prove the index  theorem for Dirac operators.
In \cite{BvE2} we reduce the general elliptic operator case to the Dirac case.
Finally, in \cite{BvE3} we reduce the case of hypoelliptic operators on contact manifolds to the elliptic case.
The unifying theme of these papers is that $K$-homology provides the topological foundation for index theory.
The  $K$-homology point of view is that any reasonable index problem can be solved by reducing to the Dirac case.

The first two papers are spin-offs of the third \cite{BvE3}.
These papers were written to clear up basic points about index theory that are generally accepted as valid, but for which no proof has been published.
Some of these points are needed for the third paper in the series  \cite{BvE3}.

There are two kinds of proof of the Atiyah-Singer theorem.
The heat kernel proof essentially uses no topology \cite{BGV}.
In this proof all the calculations are done on the original manifold $M$. 
All other proofs use topology, and proceed by a reduction in steps to a case where the formula can be  verified by a straightforward calculation.

Our proof of Theorem \ref{thm} is organized in such a way as to clearly reveal the fundamental role of Bott periodicity.
The Axioms in the $K$-theory proof of \cite{AS1} are replaced by two moves: bordism and vector bundle modification.
The proof relies on the invariance of the topological and analytical index under these two moves.
Following the approach of Grothendieck \cite{BoSe} (who reduced to $\CC P^n$), we reduce to the problem on a sphere. 
Bott perodicity determines (up to stable isomorphism) all the $\CC$ vector bundles on spheres.
In the end one needs to do one index calculation.
We formalize this in the assertion that Bott Periodicity determines the geometrically defined $K$-homology of a point.
\footnote{Throughout this paper $K$-theory is Atiyah-Hirzebruch $K$-theory. In particular, this is $K$-theory with compact supports.}

Our aim in the present paper and its sequel \cite{BvE2} is to give the simplest possible exposition of the topological proof of Atiyah-Singer. 
The proof given here has the merit that it applies with no essential changes to the equivariant case and the families case.

\section{Dirac operators}

\subsection{Dirac operator of $\RR^n$}\label{Dirac_Rn}

To define the  Dirac operator of $\mathbb{R}^n$ we shall construct matrices $E_1, E_2, \hdots , E_n$ with the following properties:
\begin{itemize}
\item Each $E_j$ is a $2^r\times2^r $ matrix of complex numbers,\\
 where r is the largest integer $\le n/2$ (i.e. $n=2r$ or $n=2r+1$).
\item Each $E_j$ is skew adjoint, i.e. $E_j^* = -E_j$
\item $E_j^2 = -I,\quad j =1, 2, \dots, n$ \quad
($I $ is the $2^r\times2^r $ identity matrix.)
\item $E_jE_k + E_kE_j = 0$ whenever $j \ne k$.
\item For $n$ even each $E_j$ is of the form
\[
E_j=
\left[
\begin{matrix}
\mathbf{0}&*\\
*&\mathbf{0}
\end{matrix}
\right]
\]
and
\[ i^rE_1E_2\cdots E_n=
\left[
\begin{matrix}
I&\mathbf{0}\\
\mathbf{0}&-I
\end{matrix}
\right]
\qquad i = \sqrt{-1}
\]
\item For $n$ odd 
\[ i^{r+1}E_1E_2 \cdots E_n = I\]
\end{itemize}

These matrices are constructed by a simple inductive procedure.
If $n=1$ then  $E_1 =[-i]$.
New matrices $\widetilde{E}_1,\widetilde{E}_2, \ldots,\widetilde{E}_{n+1}$ are
constructed from $E_1, \ldots, E_n$ as follows.
If $n$ is odd:
\[
\widetilde{E}_j=
\left[
\begin{smallmatrix}
\mathbf{0}& E_j\\
E_j &\mathbf{0}
\end{smallmatrix}
\right] 
\quad\text{for }j=1,\ldots,n\quad\text{and}\quad \widetilde{E}_{n+1}=
\left[
\begin{smallmatrix}
\mathbf{0}&-I\\
I&\mathbf{0}
\end{smallmatrix}
\right]
\]
If $n$ is even:
\[
\widetilde{E}_j=E_j
\quad\text{for }j=1,\ldots,n\quad\text{and}\quad \widetilde{E}_{n+1}=
\left[
\begin{smallmatrix}
-iI&\mathbf{0}\\
\mathbf{0}&iI
\end{smallmatrix}
\right]
\]
For $n=1, 2, 3, 4$ the matrices are:
\begin{itemize}
\item[$n=1$] $E_1=[-i]$
\vskip 6pt
\item[$n=2$] $E_1=
\left[\begin{smallmatrix}
0&-i\\
-i&0
\end{smallmatrix}
\right]$,
$E_2=
\left[\begin{smallmatrix}
0&-1\\
1&0
\end{smallmatrix}
\right]$
\vskip 6pt
\item[$n=3$] $E_1=
\left[\begin{smallmatrix}
0&-i\\
-i&0
\end{smallmatrix}
\right]$,
$E_2=
\left[\begin{smallmatrix}
0&-1\\
1&0
\end{smallmatrix}
\right]$, 
$E_3=
\left[
\begin{smallmatrix}
-i&0\\
0&i
\end{smallmatrix}
\right]$
\vskip 6pt
\item[$n=4$] $E_1=
\left[
\begin{smallmatrix}
0&0&0&-i\\
0&0&-i&0\\
0&-i&0&0\\
-i&0&0&0
\end{smallmatrix}
\right]$
$E_2=
\left[
\begin{smallmatrix}
0\:&0&\:0& -1\\
0\:&0&\:1  &0\\
0\:&-1 \:&0&0\\
1\: &0&\:0&0  
 \end{smallmatrix}
\right]$\vspace{10mm}
$E_3=
\left[
\begin{smallmatrix}
0\:&0&-i\:&0\\
0\:&0&0\:&i\\
-i\:&0&0\:&0\\
0\:&i&0\:&0  
\end{smallmatrix}
\right]$  
$E_4=
\left[
\begin{smallmatrix}
0\:\:&0& -1&0\\
0\:\:&0&0& -1\\
1\:\:  &0&0&0\\
0\:\:&  1&0&0
\end{smallmatrix}
\right]$
\end{itemize}
The Dirac operator of $\RR^n$ is
\[ D = {\displaystyle \sum_{j=1}^n \ E_j\frac{\partial }{\partial x_j}}  \]
$D$ acts on smooth sections of the trivial complex vector bundle $\RR^n\times \CC^{2^r}$, i.e.
\[ D\;\colon\; C_c^\infty(\RR^n,\CC^{2^r})\to C_c^\infty(\RR^n,\CC^{2^r})\]

\subsection{Properties of the Dirac operator of $\RR^n$}

The operator $D$ is symmetric due to $E_j^*=-E_j$.
$D$ is essentially self-adjoint, i.e. the closure of $D$ is an unbounded self-adjoint operator on the Hilbert space
\[ L^2(\RR^n)\otimes \CC^{2^r} = L^2(\mathbb{R}^n) \oplus L^2(\mathbb{R}^n) \oplus \hdots \oplus L^2(\mathbb{R}^n) \] 
Because $E_j^2=-I$ and $E_jE_k+E_kE_j=0$ when $j\ne k$ the square of the Dirac operator is the Laplacian
\[ D^2= \Delta\otimes I_{2^r}\qquad \Delta=\sum_{j=1}^n - \frac{\partial^2}{\partial x_j^2}\]
If $n$ is even, the trivial bundle $\RR^n\times \CC^{2^r}$ is graded by the matrix 
$\left[
\begin{smallmatrix}
I&\mathbf{0}\\
\mathbf{0}&-I
\end{smallmatrix}
\right]$
and since each $E_j$ is of the form 
$\left[
\begin{smallmatrix}
\mathbf{0}&*\\
*&\mathbf{0}
\end{smallmatrix}
\right]$
the Dirac operator is odd, i.e. anticommutes with the grading operator.

$D$ is a first order elliptic constant coefficient differential operator.

\subsection{The structure group}\label{strgrp}

The Dirac operator $D$ of $\RR^n$ is {\em not} $\mathrm{SO}(n)$ equivariant,
but it is $\Spin$ and  $\Spinc$ equivariant.

For $n=1,2,3,\dots$ the Clifford algebra $C_n$ is the universal unital $\RR$ algebra with $n$ generators $e_1,\dots,e_n$ and relations
\[ e_i^2=-1,\qquad e_ie_j=-e_je_i,\quad i\ne j\]
The dimension of $C_n$ is $2^n$, and the set of elements $e_{i_1}e_{i_2}\cdots e_{i_p}$, $i_1<i_2<\dots<i_p$ is a basis (including the element $1\in C_n$ for $p=0$).

Let $\Lambda^p\subset C_n$ denote the subspace spanned by the basis elements $e_{i_1}e_{i_2}\cdots e_{i_p}$, with $\Lambda^0=\RR$ spanned by $1\in C_n$.
$\Lambda^2$ is closed under commutators in $C_n$.
The Lie algebra $\Lambda^2$  is represented on $\Lambda^1=\RR^n$,
\begin{align*}
 d\rho\; &\colon\; \Lambda^2\to \mathrm{End}(\RR^n)\\
d\rho(\alpha).v &:= \alpha v- v\alpha \qquad \alpha\in \Lambda^2, \;v\in \Lambda^1=\RR^n
\end{align*}
The Lie algebra $\so(n)$ of $\SO$ consists  of skew symmetric $n\times n$ matrices with real coefficients. 
A basis for $\so(n)$ consists of the matrices $J_{ij}, i< j$  that correspond to the linear maps
\[  J_{ij}\;\colon\; e_i\mapsto e_j,\; e_j\mapsto -e_i,\; e_k\mapsto 0, \;k\ne i,j\]
Identify $\Lambda^1=\RR^n$, $\mathrm{End}(\Lambda^1)=M_n(\RR)$. Then a simple computation of the commutators $[e_ie_j,e_k]$ in $C_n$ shows that
\[ d\rho\;\colon\; \Lambda^2\cong \so(n) \;\colon\; \frac{1}{2}e_ie_j \mapsto J_{ij}\]
Define  the spin group $\Spin$ as the exponential of $\Lambda^2$ in $C_n$,
\[ \Spin :=\{\sum_{k=0}^\infty \frac{1}{k!} \alpha^k \in C_n  \;\colon\; \alpha\in \Lambda^2\} \subset C_n\]
The Lie algebra representation $d\rho\;\colon\;\Lambda^2\to \so(n)$ induces a group representation 
\begin{align*}
 \rho\; &\colon\; \Spin \to \SO\\
\rho(g).v &:= g v g^{-1} \qquad g\in \Spin\subset C_n, \;v\in \Lambda^1=\RR^n
\end{align*}
For $n\ge 2$, the group $\Spin$ is the connected double cover of $\SO$.

$\Spinc$ is the exponential of $\Lambda^2\otimes\CC$ in the complexified Clifford algebra $C_n\otimes \CC$.
  $\Spinc$ is isomorphic to the quotient of $\mathrm{Spin}(n)\times U(1)$
by the 2 element group $\{(1,1), (\varepsilon, -1)\}\cong \ZZ/2\ZZ$,
where $\varepsilon$ is the non-identity element in the kernel of $\Spin\to \SO$,
\[ \mathrm{Spin}^c(n)\cong \mathrm{Spin}(n)\times_{\ZZ/2\ZZ} U(1)\] 
The group $\Spinc$ acts on $\RR^n$ via its surjection onto $\SO$.

\subsection{The spin representation}
For $n=2r$ even, we have an isomorphism
\[ c\;\colon\;C_n\otimes \CC \cong M_{2^r}(\CC)\qquad c(e_j)=E_j\]
where $E_1, \dots, E_n$ are the $2^r\times 2^r$ complex matrices defined in section \ref{Dirac_Rn}.

The vector space $\CC^{2^r}$ is the vector space of a representation of $\Spin$
and $\Spinc$  known  as the spin representation.
The spin representation is constructed via the inclusion of $\Spinc$ in the complexified Clifford algebra $C_n\otimes \CC$.

The representation of $\Spinc$ on $\CC^{2^r}$ gives a representation of $\Spinc$ on $\mathrm{End}(\CC^{2^r})\cong \CC^{2^r}\otimes (\CC^{2^r})^*$,
and the map
\[ c\;\colon\; \RR^n\to \mathrm{End}(\CC^{2^r})\qquad c(\xi)=\sum_{j=1}^n \xi_jE_j\]
is a $\Spinc$ equivariant inclusion of vector spaces.
If $n$ is even, the grading operator $\left[
\begin{smallmatrix}
I&\mathbf{0}\\
\mathbf{0}&-I
\end{smallmatrix}
\right]$
is $\Spinc$ equivariant.

The trivial bundle $\RR^n\times \CC^{2^r}$ is a $\Spinc$ equivariant vector bundle on $\RR^n$,
where $\Spinc$ acts on $\RR^n$ and $\CC^{2^r}$ as above.
The (full) symbol of the Dirac operator is $\sigma(x,\xi)=ic(\xi)$,
and so the action of $D$ on $C_c^\infty(\RR^n, \CC^{2^r})$ is $\Spinc$ equivariant.

An excellent reference for $\Spin$, $\Spinc$ and Clifford algebras is \cite{ABS}.

\subsection{Spin$^c$ vector bundles}

Let $F$ be a  $C^\infty$ $\RR$ vector bundle on a $C^\infty$ manifold $M$ of fiber dimension $n$.
Then $\F(F)$ is the principal $\GL$ bundle on $M$ whose fiber at $p\in M$
is the set of bases $(v_1, v_2, \dots, v_n)$ for the fiber $F_p$.   
\begin{definition}
A Spin$^c$ datum for a $C^\infty$ $\RR$ vector bundle $F$ on a $C^\infty$ manifold $M$
is a pair $(P,\eta)$ where $P$ is a $C^\infty$   principal $\Spinc$ bundle  on $M$ and  $\eta\,\colon P\to \F(F)$ is   a homomorphism of principal bundles that is compatible with the homomorphism $\rho\,\colon \Spinc\to\GL$ in the sense that there is commutativity in the diagram
\[ \xymatrix{   P\times \Spinc \ar[r]\ar[d]_{\eta\times\rho} & P \ar[d]^\eta& \\
 \F(F)\times \GL \ar[r] & \F(F)}
\]
\end{definition}
\begin{definition}
A Spin$^c$ vector bundle is a $\CC^\infty$ $\RR$ vector bundle with a given Spin$^c$ datum.
\end{definition}


An {\em isomorphism} of Spin$^c$ data $(P,\eta), (P',\eta')$ for a vector bundle $F$ is an isomorphism of the principal $\Spinc$ bundles $P\cong P'$ that commutes with the maps $\eta, \eta'$.
Two Spin$^c$ data $(P,\eta), (P',\eta')$ for $F$ are {\em homotopic} if there exists a $C^\infty$ homotopy $(P,\eta_t)$ of Spin$^c$ data such that $\eta_0=\eta$ and $(P,\eta_1)$ is isomorphic to $(P',\eta')$.

A {\em Spin$^c$ structure} for $F$ is an isomorphism class of Spin$^c$ data.
A {\em Spin$^c$ orientation} of $F$ is a homotopy class of  Spin$^c$ data.
\footnote{
A Spin$^c$ orientation determines a $K$-orientation.
However, not every $K$-orientation is Spin$^c$.}

Note that a Spin$^c$ structure is a Euclidean structure plus a Spin$^c$ orientation.
A Spin$^c$ orientation determines an orientation in the usual sense.
$F$ is Spin$^c$ orientable if and only if it is orientable (i.e. the first Stiefel-Whitney class $w_1(E)$ is zero)
and the second Stiefel-Whitney class $w_2(F)$ is in the image of the  map $H^2(M,\ZZ)\to H^2(M,\ZZ/2\ZZ)$.

A Spin$^c$ structure changes the structure group of $F$ from $\GL$ to $\Spinc$, in the sense that it determines an  isomorphism 
\[ P\times_{\Spinc} \RR^n \cong F\]
The spinor bundle $S_F$ of $F$ is the complex vector bundle on $M$
\[S_F=P\times_{\Spinc} \CC^{2^r}\]
$S_F$ comes equipped with a vector bundle map
\[ c\;\colon\;F\to \mathrm{End}(S_F)\]
called the Clifford action of $F$ on $S_F$.
If the fiber dimension of $F$ is even, then $S_F$ is $\ZZ/2\ZZ$ graded, and the Clifford action is odd.

\subsection{The Thom class of a Spin$^c$ vector bundle}\label{Thom}

If $M$ is compact and the fiber dimension of $F$ is even, the Spin$^c$ datum for $F$ determines an element 
\[\lambda_F=(\pi^*S_F^+, \pi^*S_F^-,c)\in K^0(F)\]
as follows.
Denote the projection of $F$ onto $M$ by $\pi:F\to M$.
The pull-back of $S_F$ to $F$ is a direct sum $\pi^*S_F=\pi^*S_F^+\oplus\pi^*S_F^-$,
and the Clifford action of $F$ is a vector bundle map $\pi^*S_F^+\to \pi^*S_F^-$
which is an isomorphism outside the zero section of $F$.

The dual (or conjugate) of $\lambda_F$ will be denoted by $\tau_F$,
and will be referred to as the {\em Thom class} of the Spin$^c$ vector bundle $F$.
Thus, $\tau_F$ is
\[\tau_F=(\pi^*\overline{S_F^+}, \pi^*\overline{S_F^-},c)\in K^0(F)\]
where $\overline{S_F^+},\; \overline{S_F^-}$ are the conjugate vector bundles of $S_F^+,\; S_F^-$.
Note that $c$ is unchanged.

The Thom class $\tau_F$ restricts in each fiber $F_p, p\in M$ to the Bott generator element of the oriented vector space $F_p$, i.e. $\mathrm{ch}(\tau_F|F_p)[F_p]=1$.

\subsection{Spin$^c$ manifolds}\label{spincm}

\begin{definition}
A Spin$^c$ manifold is a $C^\infty$ manifold $M$ (with or without boundary) whose tangent bundle $TM$ is a Spin$^c$ vector bundle.
\end{definition}
Every Spin manifold is Spin$^c$.
Also, every stably almost complex manifold is Spin$^c$ oriented.
This includes complex manifolds, symplectic manifolds, and contact manifolds.
Most of the oriented manifolds that occur in practice are Spin$^c$ oriented.

Let $\Omega$ be a Spin$^c$ manifold with boundary $\partial \Omega=M$.
The Spin$^c$ datum for $\Omega$ determines a Spin$^c$ datum for $M$ as follows.

The frame bundle $\F(TM)$ injects into $\F(T\Omega)$ by adding the outward unit normal vector ${\bf n}$ as the first vector to each frame,
\[ \F(TM)\to \F(T\Omega)\qquad (v_1, v_2, \dots,v_n)\mapsto ({\bf n}, v_1, v_2, \dots, v_n)\] 
The principal $\Spinc$ bundle $P^M$ of $M$ is the preimage of the principal $\mathrm{Spin}^c(n+1)$ bundle $P^\Omega$ of $\Omega$ under this map,
\[ \xymatrix{   P^M \ar[r]\ar[d] & P^\Omega \ar[d]& \\
 \F(TM)\ar[r] & \F(T\Omega)}
\] 
If $\Omega$ is odd dimensional, then the spinor bundle of $M$ is the restriction to $M$ of the spinor bundle of $\Omega$.
The grading operator is the Clifford action $ic({\bf n})$, where ${\bf n}$ is the outward pointing unit normal vector.

\subsection{The 2-out-of-3 lemma}

The construction of the Spin$^c$ orientation of a boundary is a special case of the 2-out-of-3 principle.

There is a canonical homomorphism $\mathrm{Spin}^c(k)\times \mathrm{Spin}^c(l)\to \mathrm{Spin}^c(k+l)$
that gives commutativity in the diagram
\[ \xymatrix{  \mathrm{Spin}^c(k)\times \mathrm{Spin}^c(l)  \ar[r]\ar[d] &\mathrm{Spin}^c(k+l) \ar[d]& \\
 \mathrm{SO}(k)\times \mathrm{SO}(l)\ar[r] & \mathrm{SO}(k+l)}
\]
Hence, if  $F_1, F_2$ are two Spin$^c$ vector bundles on $M$,
then there is a straightforward Spin$^c$ datum for the direct sum $F_1\oplus F_2$,
and therefore the direct sum of two Spin$^c$ oriented vector bundles is Spin$^c$ oriented.

The construction is such that the Spin$^c$ line bundle of $F_1\oplus F_2$ is the tensor product of the Spin$^c$ line bundles of $F_1$ and $F_2$.
As a result, the Todd class is multiplicative for Spin$^c$ vector bundles,
\[ \mathrm{Td}(F_1\oplus F_2) = \mathrm{Td}(F_1)\cup \mathrm{Td}(F_2)\]

\begin{lemma}\label{2outof3}
Let $F_1, F_2$ be two $C^\infty$ $\RR$ vector bundles on $M$.
Assume given Spin$^c$ orientations for $F_1$ and for $F_1\oplus F_2$.
Then there exists a unique Spin$^c$ orientation for $F_2$ such that the direct sum Spin$^c$ orientation of $F_1\oplus F_2$ is the given one.
\end{lemma}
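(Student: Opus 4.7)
My plan is a transition-function computation that inverts the direct sum construction discussed just before the lemma. The key algebraic input will be the structure of the canonical homomorphism $\phi \colon \mathrm{Spin}^c(k) \times \mathrm{Spin}^c(l) \to \mathrm{Spin}^c(k+l)$: its kernel is the anti-diagonal $U(1)$, so once the first argument of $\phi$ is pinned down, the second is determined uniquely.

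First I would choose a Riemannian metric on $M$ making $F_1 \perp F_2$ inside $F_1 \oplus F_2$, so that the $\mathrm{SO}$-structure on $F_1 \oplus F_2$ respects the splitting, and then pass to an open cover $\{U_\alpha\}$ simultaneously trivializing $F_1$, $F_2$, and both Spin$^c$ data. The Spin$^c$ transition functions $g^{12}_{\alpha\beta} \in \mathrm{Spin}^c(k+l)$ then factor through the image of $\phi$ --- this uses the reduction of $P_{12}$ along $\eta_{12}^{-1}(\F(F_1) \times_M \F(F_2))$, whose structure group is precisely this image. Writing $g^{12}_{\alpha\beta} = \phi(a_{\alpha\beta}, b_{\alpha\beta})$ and forcing $a_{\alpha\beta}$ to equal the Spin$^c$ transition functions $g^1_{\alpha\beta}$ for $F_1$, the element $b_{\alpha\beta} \in \mathrm{Spin}^c(l)$ is uniquely determined. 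A direct calculation using the cocycle conditions on $g^{12}$ and $g^1$, the fact that $\phi$ is a homomorphism, and the uniqueness of $b$ given $a$ will show that $\{b_{\alpha\beta}\}$ is itself a cocycle. This defines a principal $\mathrm{Spin}^c(l)$-bundle $P_2$ together with a natural map $\eta_2 \colon P_2 \to \F(F_2)$, and by construction the $\phi$-direct sum of $(P_1, \eta_1)$ with $(P_2, \eta_2)$ reproduces $(P_{12}, \eta_{12})$.

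For uniqueness within an isomorphism class, two Spin$^c$ data for $F_2$ with the same direct sum must produce identical $b_{\alpha\beta}$ in compatible gauges and hence be isomorphic. For uniqueness of the homotopy class (i.e.\ the Spin$^c$ orientation), applying the construction pointwise to a smooth path of Spin$^c$ data on $F_1 \oplus F_2$ will yield a smooth path on $F_2$. Independence of the initial metric choice follows because the space of such metrics is convex, hence contractible, so different choices produce homotopic data on $F_2$. The main obstacle will be the initial reduction step: verifying that the Spin$^c$ datum on $F_1 \oplus F_2$ genuinely splits according to the summand structure amounts to identifying $\eta_{12}^{-1}(\F(F_1) \times_M \F(F_2)) \subset P_{12}$ as a principal bundle for the image of $\phi$, and then using the given Spin$^c$ datum on $F_1$ to resolve the anti-diagonal $U(1)$ ambiguity in the preimage under $\phi$.
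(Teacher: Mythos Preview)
The paper states this lemma without proof, so there is nothing to compare against directly. Your transition-function approach is sound and is the standard way to establish the 2-out-of-3 principle. The key algebraic fact you isolate---that the kernel of $\phi\colon\mathrm{Spin}^c(k)\times\mathrm{Spin}^c(l)\to\mathrm{Spin}^c(k+l)$ is the anti-diagonal $U(1)$---is correct, and it indeed forces $b_{\alpha\beta}$ to be unique once $a_{\alpha\beta}$ is fixed; the cocycle check for $\{b_{\alpha\beta}\}$ then goes through exactly as you describe.

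One point to phrase more carefully: you write ``choose a Riemannian metric on $M$,'' but what is actually needed is a Euclidean metric on the bundle $F_1\oplus F_2$ for which $F_1\perp F_2$ \emph{and} whose restriction to $F_1$ agrees with the metric carried by the given Spin$^c$ datum on $F_1$. Only with that alignment can you guarantee that the $\mathrm{SO}(k)$-block of $\rho(g^{12}_{\alpha\beta})$ coincides with $\rho(g^1_{\alpha\beta})$, which is precisely what allows you to take $a_{\alpha\beta}=g^1_{\alpha\beta}$ in a preimage under $\phi$. Since Spin$^c$ \emph{orientations} are homotopy classes of data, this alignment can always be arranged, and your remark about contractibility of the space of metrics handles the independence of the outcome from that choice.
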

In summary, given a short exact sequence of $\RR$ vector bundles
\[ 0\to F_1\to F_3\to F_2\to 0\]
where 2 out of 3 are Spin$^c$ oriented, a Spin$^c$ orientation is then determined for the third.

\subsection{The Dirac operator of a Spin$^c$ manifold}

On a Spin$^c$ manifold $M$ there is a first order elliptic differential operator known as its Dirac operator.
The symbol of the Dirac operator on $\RR^n$ is the $\Spinc$ equivariant map,
\[ ic\;\colon\;\RR^n\to \mathrm{End}(\CC^{2^r})\]
Via the given isomorphism $TM\cong P\times_{\Spinc} \RR^n$ we then obtain the principal symbol of an operator on $M$,
\[ \sigma(p)\;\colon\; T^*_pM\to \mathrm{End}(S_p)\qquad p\in M\] 
where $S$ is the spinor bundle of the Spin$^c$ vector bundle  $TM$.
The Dirac operator of $M$ is a first order differential operator $D$ on $M$ whose principal symbol is this symbol.
Thus $D$ is a linear map
\[ D\;\colon\; C_c^\infty(M,S)\to C_c^\infty(M,S)\]
$D$ is unique up to lower order terms.

If $n$ is even, the $\Spinc$ equivariant grading operator 
$\left[
\begin{smallmatrix}
I&\mathbf{0}\\
\mathbf{0}&-I
\end{smallmatrix}
\right]$
of $\CC^{2^r}$ determines a grading operator for the spinor bundle $S$ of any $n$-dimensional Spin$^c$ manifold $M$.
For an appropriate choice of lower order terms the Dirac operator $D$ of $M$
anticommutes with the grading.

\subsection{Twisting by vector bundles}

Let $M$ be a Spin$^c$ manifold with Dirac operator $D$.
Suppose that $E$ is a $C^\infty$ $\CC$ vector bundle on $M$.
We can twist the symbol of the Dirac operator by $E$,
\[ \sigma_E(p)\;\colon\; T^*_pM\to \mathrm{End}(S_p\otimes E_p)\qquad \sigma_E(p)=\sigma(p)\otimes I_{E_p}\]
The Dirac operator twisted by $E$  is a first order differential operator $D_E$ on $M$ whose principal symbol is $\sigma_E$,
\[D_E \;\colon\; C_c^\infty(M,S\otimes E) \to C_c^\infty(M,S\otimes E)\]
$D_E$ is determined up to lower order terms.
In the even case the bundle $S\otimes E$ is graded and (for an appropriate choice of lower order terms) $D_E$ anticommutes with the grading. 
Hence $D_E$ gives a map
\[D_E^+ \;\colon\; C_c^\infty(M,S^+\otimes E) \to C_c^\infty(M,S^-\otimes E)\]
If $M$ is closed, this map $D_E^+$ has finite dimensional kernel and cokernel, and the index of $D_E$ is the difference of these two dimensions
\[ \Ind \,D_E := \mathrm{dim \,Kernel} \,D_E^+ - \mathrm{dim \,Cokernel} \,D_E^+\]
The index of an elliptic operator depends only on its highest order part,
and therefore $\Ind\, D_E$ is well defined.

\subsection{The Bott generator vector bundle}\label{Bottvb}

For $n=2r+1$
restrict the map $c\;\colon \RR^{n}\to \mathrm{End}(\CC^{2^r})$ to the unit sphere $S^{2r}\subset \RR^n$,
\[ c\;\colon \; S^{2r}\to \mathrm{End}(\CC^{2^r})\qquad c(t)=\sum_{j=1}^{2r+1} t_jE_j\]
If $\sum t_j^2=1$ we obtain
\[ c(t)^2=\left(\sum_{j=1}^n t_jE_j\right)^2=\sum_{j=1}^n t_j^2E_j^2+\sum_{j< k} t_jt_k(E_jE_k+E_kE_j) = -I_{2^r}\]
$ic(t)$ is the grading operator for the spinor bundle $S^{2r}\times \CC^{2^r}$ of $S^{2r}$, where $S^{2r}$ has the Spin$^c$ structure it receives as the boundary of the unit ball in $\RR^{2r+1}$.

The Bott generator vector bundle $\beta$ is the positive spinor bundle if $r$ is even,
and the negative spinor bundle if $r$ is odd. 
Equivalently, in all cases $\beta$ is the dual of the positive spinors.
The Bott generator vector bundle is determined, up to isomorphism, by the following two properties
\begin{itemize}
\item The fiber dimension of $\beta$ is $2^{r-1}$.
\item $\mathrm{ch}(\beta)[S^{2r}]=1$
\end{itemize}
The Bott generator vector bundle $\beta$ is $\mathrm{Spin}^c(n+1)$ equivariant, where $\mathrm{Spin}^c(n+1)$ acts on $S^n\subset \RR^{n+1}$ via its isometry group $\mathrm{SO}(n+1)$.


\begin{proposition}\label{chbeta}
If $\beta$ is the Bott generator vector bundle on $S^{2r}$ then
\[\mathrm{ch}(\beta)[S^{2r}]=1\]
\end{proposition}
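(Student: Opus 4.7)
The plan is to compute $\mathrm{ch}(\beta)[S^{2r}]$ directly via Chern--Weil theory. By construction $\beta=\overline{S^+}$, where $S^+\subset S^{2r}\times\CC^{2^r}$ is the $(+1)$-eigenbundle of the pointwise grading operator $ic(t)$. Equivalently, $S^+$ is the image of the smooth family of orthogonal projections $P(t)=\frac{1}{2}(1+ic(t))$ and inherits a Hermitian connection $\nabla=P\circ d$ from the flat trivial connection on $S^{2r}\times\CC^{2^r}$.

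Since complex conjugation acts as $(-1)^k$ on $H^{2k}$, one has $\mathrm{ch}(\beta)[S^{2r}]=(-1)^r\,\mathrm{ch}(S^+)[S^{2r}]$, so it suffices to show $\mathrm{ch}(S^+)[S^{2r}]=(-1)^r$. The standard formula for the curvature of a subbundle gives $\Omega=P\,dP\wedge dP$, so the top-degree component of the Chern character is a universal constant times $\mathrm{tr}\bigl((P\,dP\,dP)^r\bigr)$. Expanding $dP=\frac{i}{2}\sum_{j=1}^{2r+1}E_j\,dt_j$ and applying the Clifford relations $E_jE_k+E_kE_j=-2\delta_{jk}I$ together with (for $n=2r+1$ odd) $i^{r+1}E_1\cdots E_{2r+1}=I$, the trace reduces to a constant multiple of the round volume form on $S^{2r}$.

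The constant is pinned down by $\mathrm{SO}(2r+1)$-equivariance: since both sides are $\mathrm{SO}(2r+1)$-invariant top forms on the homogeneous space $S^{2r}=\mathrm{SO}(2r+1)/\mathrm{SO}(2r)$, it suffices to evaluate at a single convenient point, for instance $t=e_{2r+1}$, where $P=\frac{1}{2}(I+iE_{2r+1})$ and $dP$ involves only $E_1,\ldots,E_{2r}$. Substituting $\mathrm{Vol}(S^{2r})$ and simplifying yields $\mathrm{ch}(S^+)[S^{2r}]=(-1)^r$, and hence $\mathrm{ch}(\beta)[S^{2r}]=1$. The main obstacle is the bookkeeping in the explicit Clifford trace: one must identify which products $E_{j_1}\cdots E_{j_{2r}}$ yield a nonzero trace (they must equal $\pm E_{2r+1}$ up to a power of $i$, by the volume-element identity) and match the resulting combinatorial factor against the normalization of the round volume form.
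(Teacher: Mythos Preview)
Your approach is essentially the same as the paper's: both compute $\mathrm{ch}(S^+)[S^{2r}]=(-1)^r$ via the projection $e=\tfrac{1}{2}(1+ic(t))$ and the Chern--Weil form $\mathrm{tr}\bigl(e\,(de)^{2r}\bigr)$, reducing the trace with the Clifford relations and the identity $i^{r+1}E_1\cdots E_{2r+1}=I$. The only difference is the endgame: instead of invoking $\mathrm{SO}(2r+1)$-invariance and the explicit volume of $S^{2r}$, the paper applies Stokes' theorem to convert the sphere integral into $(2r+1)\int_{B^{2r+1}}dt_1\cdots dt_{2r+1}$, which absorbs the combinatorial constants more transparently.
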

\begin{proof}
The positive spinor bundle $S^+$ corresponds to the projection valued function
\[ e\;\colon\; S^{2r}\to \mathrm{End}(\CC^{2^r})\qquad e=\frac{1}{2}(1+i\sum_{j=1}^{2r+1} t_jE_j)\]
The Chern character of $S^+$ is represented by a differential form
whose component in dimension $2r$ is
\[ \frac{i^r}{r!\,(2\pi)^r}  \mathrm{trace}( e\,(de)^{2r})\] 
We obtain
\[(de)^{2r}=\frac{i^{2r}(2r)!}{2^{2r}} \sum dt_1\cdots \widehat{dt_j}\cdots dt_{2r+1} \cdot E_1\cdots \widehat{E_j}\cdots E_{2r+1}\]
From $i^{r+1}E_1E_2\cdots E_{2r+1}=I$ we get $i^{r+1}E_1\cdots \widehat{E_j}\cdots E_{2r+1}=(-1)^jE_j$ and so
\[(de)^{2r}=\frac{i^{r-1}(2r)!}{2^{2r}} \sum (-1)^j\, dt_1\cdots \widehat{dt_j}\cdots dt_{2r+1} \cdot E_j\]
The trace of the matrices $E_j$ and $E_jE_k$ with $j\ne k$ is zero, while the trace of $E_j^2=-I$ is $-2^r$.
Thus
\[  \frac{i^r}{r!\,(2\pi)^r}  \mathrm{trace}( e\,(de)^{2r}) = \frac{i^{2r}(2r)!}{(2\pi)^r 2^{r+1}(r!)} \sum (-1)^{j-1}\, t_j dt_1\cdots \widehat{dt_j}\cdots dt_{2r+1} \]
By Stokes's Theorem,
\[ \int_{S^{2r}} \mathrm{ch}(S^+) =  \frac{i^{2r}(2r)!}{\pi^r 2^{2r+1}(r!)} \int_{B^{2r+1}} (2r+1)\,dt_1\cdots dt_{2r+1} = (-1)^r\]

\end{proof}


\subsection{An index 1 operator}\label{indexone}

For the next proposition, recall that if $V$ is a finite dimensional vector space, then there is a canonical nonzero element in $V\otimes V^*$,
which maps to the identity map under the isomorphism $V\otimes V^*\cong \mathrm{Hom}(V,V)$. 

\begin{proposition}\label{index1}
If $D$ is the Dirac operator of the even dimensional sphere $S^n$
with the Spin$^c$ datum it receives as the boundary of the unit ball in $\RR^{n+1}$,
then
\[ \mathrm{Index} \;D_\beta = 1\]
More precisely, $D_\beta$ has zero cokernel, and the kernel of $D_\beta$ is spanned by the canonical section of $S^+\otimes \beta=S^+\otimes (S^+)^*$.
\end{proposition}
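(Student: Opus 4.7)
My plan is to exhibit an explicit element of $\ker D_\beta^+$ and then use the Bochner--Weitzenb\"ock formula together with a Clifford algebra eigenvalue computation to pin down both $\ker D_\beta^+$ and $\ker D_\beta^-$ exactly.

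Since $\beta = (S^+)^*$, one has $S^+\otimes\beta \cong \mathrm{End}(S^+)$, and the canonical section is $\phi_0 = \mathrm{id}_{S^+}$. The identity endomorphism of any vector bundle is parallel under the induced connection: for any tangent vector $X$ and section $s$,
\[
(\nabla_X \phi_0)(s) = \nabla_X(\phi_0 s) - \phi_0(\nabla_X s) = 0.
\]
Since $D_\beta = c\circ\nabla$, parallel sections lie in its kernel; in particular $D_\beta \phi_0 = 0$, so $\phi_0 \in \ker D_\beta^+$.

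Next I apply the Weitzenb\"ock formula
\[
D_\beta^{\,2} = \nabla^*\nabla + \tfrac{R}{4} + \mathcal R^\beta
\]
with $R = 2r(2r-1)$ the scalar curvature of the round unit sphere. The key technical input is a computation in the Clifford algebra. Under the identification $\mathrm{End}(S) \cong \Cliff(2r)\otimes\CC$ graded by Clifford degree, the Riemann tensor on the round sphere combined with the dual representation on $(S^+)^*$ expresses the twisting curvature endomorphism as a specific bilinear combination of Clifford elements; using the identity
\[
\sum_{l=1}^{n} E_l \Gamma_p E_l = (-1)^p(2p-n)\Gamma_p
\]
for a degree-$p$ monomial $\Gamma_p = E_{i_1}\cdots E_{i_p}$ (with $i_1 < \cdots < i_p$) and simplifying, one arrives at the clean eigenvalue formula
\[
\tfrac{R}{4} + \mathcal R^\beta \;=\; p(n-p) \quad \text{on } \Gamma_p, \qquad n = 2r.
\]
This is nonnegative and vanishes if and only if $p \in \{0, n\}$.

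To conclude, decompose $\mathrm{End}(S) = \mathrm{End}(S^+) \oplus \mathrm{End}(S^-) \oplus \mathrm{Hom}(S^+, S^-) \oplus \mathrm{Hom}(S^-, S^+)$ via the block projectors $\phi \mapsto e_\varepsilon \phi e_{\varepsilon'}$. Since $e_\pm$ commutes with every $E_kE_l$, the operator $\tfrac{R}{4}+\mathcal R^\beta$ respects this splitting. On $\mathrm{End}(S^+) = S^+\otimes\beta \subset \Cliff^{\mathrm{even}}$, the kernel of $\tfrac{R}{4}+\mathcal R^\beta$ is exactly $\CC\phi_0$: only $\Gamma_0 = I$ and $\Gamma_n = E_1\cdots E_n$ have vanishing eigenvalue, and both project onto the line $\CC\phi_0$ under $\phi \mapsto e_+\phi e_+$ (since $e_+Ie_+ = \phi_0$ and $e_+\Gamma_n e_+ = i^{-r}\phi_0$, using $\Gamma_n = i^{-r}\gamma$ and $\gamma e_+ = e_+$). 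On $\mathrm{Hom}(S^+, S^-) = S^-\otimes\beta \subset \Cliff^{\mathrm{odd}}$, every contributing Clifford degree is odd with $0 < p < n$, so $\tfrac{R}{4}+\mathcal R^\beta$ is strictly positive. Weitzenb\"ock then closes the
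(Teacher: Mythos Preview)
Your argument is correct and complete in outline; the truncation at the end is harmless since the Weitzenb\"ock inequality immediately finishes things once the curvature term is shown to be $p(n-p)\ge 0$ on Clifford degree $p$, with strict positivity on the odd part $S^-\otimes\beta$.  Your Clifford identity $\sum_l E_l\Gamma_pE_l=(-1)^p(2p-n)\Gamma_p$ is right, and combining it with the spinor curvature $R^S(e_j,e_k)=-\tfrac12 E_jE_k$ on the unit sphere one indeed obtains
\[
\tfrac{R}{4}+\mathcal R^\beta \;=\;\frac{n(n-1)}{4}+\frac{n-(2p-n)^2}{4}\;=\;\frac{n^2-(2p-n)^2}{4}\;=\;p(n-p),
\]
so your key eigenvalue claim checks out.

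The paper takes a different route.  Rather than working with the Lichnerowicz formula, it invokes the representation-theoretic isomorphism $S\otimes S^*\cong \Lambda_\CC TM$, under which $D_{S^*}$ becomes the Hodge--de Rham operator $d+d^*$; the kernel is then read off from the well-known fact that the only harmonic forms on $S^n$ live in degrees $0$ and $n$.  Your approach and the paper's are really two faces of the same coin: the identity $\tfrac{R}{4}+\mathcal R^{S^*}=p(n-p)$ that you establish is exactly the Weitzenb\"ock curvature term for the Hodge Laplacian on $p$-forms on the round sphere, so your computation amounts to reproving (in Clifford language) that harmonic forms on $S^n$ occur only in the extreme degrees.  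The paper's argument is shorter because it outsources this to Hodge theory; yours is more self-contained and makes the positivity completely explicit, at the cost of the curvature calculation.  Both identify the same one-dimensional kernel spanned by $\mathrm{id}_{S^+}$, which under $S\otimes S^*\cong\Lambda^\bullet$ corresponds to the line of harmonic forms $c+c\omega$.
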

\begin{proof}
On any even dimensional Spin$^c$ manifold $M$, there is a canonical isomorphism of vector bundles
\[ S\otimes S^* \cong \Lambda_\CC TM\]
where $S$ is the spinor bundle of $M$.
This is implied by the fact that, as representations of $\Spinc$,
\[ \CC^{2^r}\otimes (\CC^{2^r})^*\cong \Lambda_\CC \RR^n\]
Via this isomorphism,  $D_{S^*}$ identifies (up to lower order terms) with $d+d^*$,
where $d$ is the de Rham operator, and $d^*$ its formal adjoint.
Note that the kernel of $D_{S^*}$ is the same as the kernel of $D_{S^*}^2=(d+d^*)^2$,
i.e. it consists of harmonic forms.
On $S^n$ the only harmonic forms are the constant functions, and scalar multiples of the standard volume form.

Because the Bott generator vector bundle $\beta$ is dual to $S^+$,
$S^+\otimes \beta\cong \mathrm{Hom}(S^+,S^+)$ contains a trivial line bundle,
which identifies with the line bundle in $\Lambda^0\oplus \Lambda^n$
spanned by $(1,\omega)$, where $\omega$ is the standard volume form.
This follows from  representation theory. 
Thus, the kernel of $D_\beta$ is the one dimensional vector space spanned by the harmonic forms $c+c\omega$, $c\in \CC$.
The intersection of $\Lambda^0\oplus \Lambda^n$ with $S^-\otimes \beta$ is zero,
and so the cokernel of $D_\beta$ is zero.

\end{proof}

\begin{remark}
Propositions \ref{chbeta} and \ref{index1} verify, by direct calculation, a special case of the index formula for an operator of index 1,
\[ \Ind\,D_\beta = \mathrm{ch}(\beta)[S^{2r}]\]
\end{remark}

\section{The Group $K_0(\cdot)$}

Define an abelian group denoted $\Kpt$ by considering pairs $(M,E)$ such that $M$ is a compact even-dimensional Spin$^c$ manifold without boundary, and $E$ is a $C^\infty$ $\CC$ vector bundle on $M$.

Define $\Kpt=\{(M,E)\}/\sim$ where the equivalence relation $\sim$ is generated by the three elementary steps
\begin{itemize}
\item Bordism
\item Direct sum - disjoint union
\item Vector bundle modification 
\end{itemize}
Addition in $\Kpt$ is disjoint union
\[ (M,E)+(M',E') = (M\sqcup M', E\sqcup E')\]

\subsection{Additive inverse}

In $\Kpt$ the additive inverse of $(M,E)$ is $(-M,E)$ where $-M$ denotes $M$ with the Spin$^c$ structure reversed,
\[ -(M,E) = (-M,E)\] 
To reverse the Spin$^c$ structure on a Spin$^c$ manifold $M$ of dimension $n$,
the Spin$^c$ datum of $M$ is modified as follows.
The Riemannian metric is unchanged. The orientation is reversed. 
The new $\SO$ principal bundle consists of those orthonormal frames that were previously negatively oriented,
\[ \FSO^-(TM)=\FSO(TM)\times_{\SO} O^-(n)\]
Here $O^-(n)$ is the set of orthogonal $n\times n$ matrices with determinant $-1$
with $\SO$ acting on $O^-(n)$ from the left and from the right by matrix multiplication.
Denote by $\tilde{O}_-(n)$ the connected double cover of $O^-(n)$,
and observe that the actions of $\SO$ on $O^-(n)$ lift to give left and right actions of $\Spinc$ on $\tilde{O}^-(n)$.
The principal $\Spinc$ bundle $P$ is replaced by 
\[P^- = P\times_{\Spinc} \tilde{O}^-(n)\]
with the evident map to $\FSO^-(TM)=\FSO(TM)\times_{\SO} O^-(n)$.

The spinor bundle of $-M$ is the same as the spinor bundle of $M$.
When $M$ is even dimensional the Clifford action is unchanged, and the grading is reversed.
In the odd dimensional case, the Clifford action $c\;\colon TM\to \mathrm{End}(S)$ is replaced by $-c$.

\subsection{Bordism}

$(M,E)$ is isomorphic to $(M',E')$ if there exists a diffeomorphism 
\[ \psi\;\colon\; M\to M'\]
that preserves the Spin$^c$ structures, in the sense that the pullback $(\psi^*P',\psi^*\eta')$ of the Spin$^c$ datum $(P',\eta')$ of $M'$ is isomorphic to the Spin$^c$ datum $(P,\eta)$ of $M$,
and the pullback $\psi^*E'$ is isomorphic (as a $\CC$ vector bundle on $M$) to $E$,
\[ \psi^*(E')\cong E\]

\begin{definition}
$(M_0, E_0)$ is {\em bordant} to $(M_1, E_1)$ if there exists a pair $(\Omega, E)$ such that
\begin{itemize}
\item $\Omega$ is a compact odd-dimensional Spin$^c$ manifold with boundary.
\item $E$ is a $C^\infty$ $\CC$ vector bundle on $\Omega$.
\item $(\partial \Omega, E|_{\partial\Omega}) \cong (M_0, E_0)\sqcup (-M_1, E_1)$
\end{itemize}
Here the boundary $\partial\Omega$ receives the Spin$^c$ datum as above, and $-M_1$ is $M_1$ with the Spin$^c$ structure reversed.
\end{definition}

\[
\begin{tikzpicture}[line width=1pt,scale=.6]
\begin{scope}
\begin{scope}
\clip (0,-3) rectangle (1.5*1.414,3);
\draw (0,0) circle (3cm);
\draw (0,0) circle (1cm);
\end{scope}
\begin{scope}
\clip (1.5*1.414,-3) rectangle (1.5*1.414+2.19,2.5);
\draw (1.5*1.414+2.19,4.18) circle (3cm); 
\draw (1.5*1.414+2.19,-4.18) circle (3cm); 
\end{scope}
\end{scope}
\begin{scope}[xscale=-1,xshift=-8.62cm]
\begin{scope}
\clip (0,-3) rectangle (1.5*1.414,3);
\draw (0,0) circle (3cm);
\draw (0,0) circle (1cm);
\end{scope}
\begin{scope}
\clip (1.5*1.414,-3) rectangle (1.5*1.414+2.19,2.5);
\draw (1.5*1.414+2.19,4.18) circle (3cm); 
\draw (1.5*1.414+2.19,-4.18) circle (3cm); 
\end{scope}
\end{scope}
\begin{scope}[xshift=8.62cm]
\begin{scope}
\clip (0,-3) rectangle (1.5*1.414,3);
\draw (0,0) circle (3cm);
\draw (0,0) circle (1cm);
\end{scope}
\begin{scope}
\clip (1.5*1.414,-3) rectangle (1.5*1.414+2.19,2.5);
\draw (1.5*1.414+2.19,4.18) circle (3cm); 
\draw (1.5*1.414+2.19,-4.18) circle (3cm); 
\end{scope}
\end{scope}
\draw (0,2) ellipse (.2cm and 1cm);
\draw (0,-2) ellipse (.2cm and 1cm);
\begin{scope}
\clip (12.95,-2) rectangle (14,2);
\draw (12.95,0) ellipse (.25cm and 1.2cm);
\end{scope}
\begin{scope}
\clip (12.95,-2) rectangle (12,2);
\draw[dashed] (12.95,0) ellipse (.25cm and 1.2cm);
\end{scope}
\begin{scope}
\clip (8.6,-3) rectangle (9,3);
\draw (8.6,2) ellipse (.2cm and 1cm);
\draw (8.6,-2) ellipse (.2cm and 1cm);
\end{scope}
\begin{scope}
\clip (8.6,-3) rectangle (8,3);
\draw[dashed] (8.6,2) ellipse (.2cm and 1cm);
\draw[dashed] (8.6,-2) ellipse (.2cm and 1cm);
\end{scope}
\draw (1,-4) node {$(M_0,E_0$)};
\draw (12,-4) node {$(-M_1,E_1$)};
\end{tikzpicture}
\]
The bordism class of a pair  $(M,E)$ only depends on the Spin$^c$ oriented manifold $M$ and the vector bundle $E$.  

\subsection{Direct sum - disjoint union}

Let $E, E'$ be two $\CC$ vector bundles on $M$, then 
\[ (M,E)\sqcup (M,E')\sim (M, E\oplus E')\]

\subsection{Vector bundle modification}\label{vbm}

Let $F$ be a Spin$^c$ vector bundle on $M$ with even fiber dimension $n=2r$.
Part of the Spin$^c$ datum of $F$ is a principal $\Spinc$ bundle $P$ on $M$,
\[F=P\times_\Spinc \RR^n\]
The Bott generator vector bundle $\beta$ on $S^n$ is $\Spinc$ equivariant.
Therefore, associated to $P$ we have a fiber bundle $\pi\,\colon \Sigma F \to M$ whose fibers are oriented spheres of dimension $n$,
\[ \Sigma F = P\times_\Spinc S^n\]
and  a vector bundle on $\Sigma F$
\[ \beta_F = P\times_\Spinc \beta\]
If $M$ is a Spin$^c$ manifold,  the total space of $F$ is Spin$^c$, because (not canonically) $TF = \pi^*F\oplus \pi^*TM$, 
and the direct sum of two Spin$^c$ vector bundles is Spin$^c$.
Every trivial bundle is Spin$^c$, so the total space of $F\oplus \underline{\RR}$ is Spin$^c$.
$\Sigma F$ is a Spin$^c$ manifold as the boundary of the unit ball bundle of $F\oplus \underline{\RR}$.

Then
\[ (M,E)\sim (\Sigma F, \beta_F\otimes \pi^*E)\]

\subsection{The equivalence relation}

The equivalence relation $\sim$ on the collection $\{(M,E)\}$ is the equivalence relation generated by the above  elementary steps --- i.e. $(M,E)\sim (M',E')$ if it is possible to pass from $(M,E)$ to $(M',E')$ by a finite sequence of the three elementary steps.

$(M,E)=0$ in $K_0(\cdot)$ if and only if $(M,E)\sim (M',E')$ where $(M',E')$ bounds.

\section{Proof of the index theorem}

\subsection{Sphere lemma}

\begin{lemma}\label{sphere}
Let $(M,E)$ be a pair as above.
Then there exists a positive integer $r$ and a $\CC$ vector bundle $F$ on $S^{2r}$
such that $(M,E)\sim (S^{2r},F)$. 
Here $S^{2r}$ has the Spin$^c$ datum it receives as the boundary of the unit ball in $\RR^{2r+1}$.
\end{lemma}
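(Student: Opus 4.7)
The plan is to embed $M$ into Euclidean space, apply vector bundle modification with respect to the normal bundle to push the data up to a sphere bundle over $M$, and then use a bordism argument to further reduce to $S^{2r}$.

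First, by the Whitney embedding theorem I choose a smooth embedding $M\hookrightarrow \RR^{2r}\subset S^{2r}$ for $r$ sufficiently large. Let $\nu$ denote the normal bundle of this embedding; its rank $2r-\dim M$ is even. Since $TM\oplus\nu\cong \underline{\RR}^{2r}|_M$ is trivial and hence canonically Spin$^c$, while $TM$ is Spin$^c$ by hypothesis, Lemma \ref{2outof3} endows $\nu$ with a canonical Spin$^c$ structure.

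Second, applying vector bundle modification with respect to $\nu$ yields
\[ (M,E)\;\sim\;(\Sigma\nu,\;\beta_\nu\otimes \pi^*E), \]
where $\Sigma\nu$ is the unit sphere bundle of $\nu\oplus\underline{\RR}$ over $M$, a compact Spin$^c$ manifold of dimension $2r$, and $\beta_\nu$ restricts along each fiber of $\pi\colon\Sigma\nu\to M$ to the Bott generator bundle on $S^{2k}$, where $2k=\mathrm{rank}\,\nu$.

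Third, I show $(\Sigma\nu,\beta_\nu\otimes\pi^*E)\sim(S^{2r},F)$ for some $F$ by a bordism argument. The geometric input is that $\Sigma\nu$ is the double of the disk bundle $D(\nu)$, while $S^{2r}=D(\nu)\cup_{S(\nu)}W$ with $W=S^{2r}\setminus\mathrm{int}\,D(\nu)$ the complement of a tubular neighborhood of $M$. Thus both $\Sigma\nu$ and $S^{2r}$ are obtained by gluing $D(\nu)$ to something along $S(\nu)$, and they differ only in the ``other side.'' A Spin$^c$ bordism between them is produced by combining the disk $D^{2r+1}$ (whose boundary is $S^{2r}=W\cup_{S(\nu)}D(\nu)$) with the cylinder $D(\nu)\times[0,1]$, glued along $S(\nu)\times[0,1]$; the result is a compact Spin$^c$ $(2r+1)$-manifold with boundary $\Sigma\nu\sqcup(-S^{2r})$.

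The main obstacle is extending the bundle $\beta_\nu\otimes\pi^*E$ across this bordism, since the output $F$ is defined precisely as the restriction of the extension to the far boundary. The factor $\pi^*E$ extends readily across $D(\nu)\times[0,1]$ via the projection to $M$. The factor $\beta_\nu$ is more delicate: it trivializes over each hemisphere bundle $D_\pm(\nu)\subset\Sigma\nu$ but is glued by the nontrivial Bott clutching along $S(\nu)$. One must arrange this clutching to extend continuously across the bordism, possibly after first stabilizing $E$ by a trivial bundle via the direct-sum/disjoint-union move. Once the extension is produced, its restriction to the $S^{2r}$ end provides the required vector bundle $F$.
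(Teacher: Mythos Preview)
Your overall strategy---embed $M$ in Euclidean space, apply vector bundle modification along the normal bundle $\nu$, then cobord the resulting sphere bundle $\Sigma\nu$ to a single sphere---is exactly the route the paper takes. The bordism the paper uses is simply the closed unit ball in $\RR^{2r+1}$ with an open tubular neighborhood of $M$ (in $\RR^{2r+1}$) removed; your description of this as $D^{2r+1}$ glued to $D(\nu)\times[0,1]$ along $S(\nu)\times[0,1]$ is imprecise (it is not clear what codimension-zero piece of $\partial D^{2r+1}$ you are gluing to, nor why the resulting boundary is $\Sigma\nu\sqcup(-S^{2r})$), but this is easily repaired by the paper's cleaner description.

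The genuine gap is the bundle extension, which you yourself flag as the ``main obstacle'' but do not actually resolve. Your suggestion that stabilizing $E$ by a trivial bundle might suffice is neither justified nor, in general, correct. Here is what is really needed. Write the closed ball in $\RR^{2r+1}$ as $B\cup\Omega$, where $B=B(\nu\oplus\underline{\RR})$ is the closed tubular neighborhood of $M$ and $\Omega$ is the closure of its complement, so that $S:=B\cap\Omega=\Sigma\nu$. The $K$-theory Mayer--Vietoris sequence gives
\[ K^0(\Omega)\oplus K^0(B)\longrightarrow K^0(S)\longrightarrow K^1(\text{ball})=0,\]
so the class of $\beta_\nu\otimes\pi^*E$ on $S=\Sigma\nu$ is a sum of a class extending over $\Omega$ and a class extending over $B$. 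The $B$-contribution is represented by some bundle $L$ on $B$ (equivalently, a bundle pulled back from $M$), and it need not be trivial. The correct move is therefore to add, via direct-sum/disjoint-union, the cycle $(\Sigma\nu,\,L|_S)$; this is harmless because it bounds $(B,L)$. Only after adding this generally non-trivial $L|_S$ does the resulting bundle extend across $\Omega$, and its restriction to $S^{2r}$ is your $F$. This is precisely the content of the paper's Lemma~\ref{L} and Steps~2--4 of its proof; without it your argument stops short of producing $F$.
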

\begin{proof}
Embed $M$ in $\RR^{2r}$.
By the 2-out-of-3 principle the normal bundle $\nu$
\[ 0\to TM\to M\times \RR^{2r}\to \nu\to 0\]
is Spin$^c$ oriented.
\vskip 6pt

Step 1. $(M,E)\sim(\Sigma \nu, \beta_\nu\otimes \pi^*E)$. (Vector bundle modification by $\nu$.)

\vskip 6pt

$\Sigma \nu$ bounds, therefore it is bordant to $S^{2r}$.
We shall use a specific bordism from $\Sigma \nu$ to $S^{2r}$.
Since $M$ is compact we may assume that the embedding of $M$ in $\RR^{2r}$
embeds $M$ in the interior of the unit ball of $\RR^{2r}$. 
Using the inclusion $\RR^{2r}\to \RR^{2r+1}$, 
a compact tubular neighborhood of $M$ in $\RR^{2r+1}$ 
identifies with the ball bundle $B(\nu\oplus \underline{\RR})$, whose boundary is $\Sigma \nu$.
Let $\Omega$ be the unit ball of $\RR^{2r+1}$ with the interior of $B(\nu\oplus \underline{\RR})$ removed.
Then $\Omega$ is a bordism of Spin$^c$ manifolds from $\Sigma \nu$ to $S^{2r}$.

By Lemma \ref{L} below, with $B=B(\nu\oplus \underline{\RR})$ and $S=S(\nu\oplus \underline{\RR})$ there exists a $\CC$ vector bundle $L$ on $B$
such that $(\beta_\nu\otimes \pi^*E)\oplus L|_S$ extends to  a vector bundle $F$ on $\Omega$.
Note that $(\Sigma \nu, L|_S)$ is the boundary of $(B,L)$.
\vskip 6pt

Step 2. $(\Sigma \nu, \beta_\nu\otimes \pi^*E)\sim (\Sigma \nu, \beta_\nu\otimes \pi^*E)\sqcup (\Sigma\nu, L|_S)$. (Bordism.)

\vskip 6pt

Step 3. $(\Sigma \nu, \beta_\nu\otimes \pi^*E)\sqcup (\Sigma\nu, L|_S)\sim (\Sigma \nu, (\beta_\nu\otimes \pi^*E)\oplus \pi^*L)$. (Direct sum - disjoint union.)

\vskip 6pt

Step 4. $(\Sigma \nu, (\beta_\nu\otimes \pi^*E)\oplus \pi^*L)\sim (S^{2r}, F|_{S^{2r}})$. (Bordism.)

\end{proof}

\begin{lemma} \label{L}
Let the unit ball in $\RR^n$ be the union of two compact sets $B, \Omega$ and let $S=B\cap \Omega$.
Given a $\CC$ vector bundle $E$ on $S$
there exists a $\CC$ vector bundle $L$ on $B$ such that $E\oplus L|_S$ extends to $\Omega$.  
\end{lemma}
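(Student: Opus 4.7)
The idea is to exploit the contractibility of the unit ball $B\cup\Omega$ via the Mayer-Vietoris exact sequence in topological $K$-theory.

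First I would invoke the Mayer-Vietoris sequence
\[ K^0(B)\oplus K^0(\Omega)\xrightarrow{\;\beta\;} K^0(S)\xrightarrow{\;\partial\;} K^1(B\cup\Omega), \]
where $\beta(u,v)=u|_S-v|_S$. Because $B\cup\Omega$ is the unit ball and hence contractible, $K^1(B\cup\Omega)=0$, so $\beta$ is surjective. Thus there exist classes $[W]\in K^0(\Omega)$ and $[L']\in K^0(B)$ with $[E]=[W]|_S-[L']|_S$ in $K^0(S)$. Since $B$ and $\Omega$ are compact, I can represent these classes by honest vector bundles modulo trivial ones, say $[W]=[W_0]-[\underline{\CC^a}]$ with $W_0$ a bundle on $\Omega$ and $[L']=[L_0]-[\underline{\CC^b}]$ with $L_0$ a bundle on $B$. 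The $K$-theory equation then reads
\[ [E]+[L_0|_S]+[\underline{\CC^a}] \;=\; [W_0|_S]+[\underline{\CC^b}] \quad\text{in } K^0(S). \]

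Next I would upgrade this $K$-theory identity to an honest isomorphism of bundles. Two bundles agree in $K^0$ of a compact space if and only if they become isomorphic after summing with a trivial bundle of sufficiently large rank; so there is an integer $k\ge 0$ giving an isomorphism
\[ E \oplus L_0|_S \oplus \underline{\CC^{a+k}}\;\cong\;W_0|_S\oplus \underline{\CC^{b+k}} \]
of bundles on $S$. Setting $L:=L_0\oplus \underline{\CC^{a+k}}$ (a bundle on $B$) and $W:=W_0\oplus \underline{\CC^{b+k}}$ (a bundle on $\Omega$), this isomorphism becomes $E\oplus L|_S\cong W|_S$, which is precisely the statement that $E\oplus L|_S$ extends to the bundle $W$ on $\Omega$. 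This $L$ is the bundle required by the lemma.

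The main --- essentially the only --- subtle point is the legitimacy of Mayer-Vietoris for the triad $(B\cup\Omega,B,\Omega)$; this holds whenever $B$ and $\Omega$ form a CW-triad or a pair of compact ANRs. In the intended application to the sphere lemma, $B$ is a closed ball bundle and $\Omega$ is the complement in the unit ball of the interior of a smooth tubular neighborhood, so both are compact manifolds with corners, and topological $K$-theoretic Mayer-Vietoris applies without issue.
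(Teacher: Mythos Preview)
Your proof is correct and follows essentially the same route as the paper: invoke Mayer--Vietoris in $K$-theory, use $K^1(\text{ball})=0$ to get surjectivity of the restriction map, represent the resulting $K$-classes as formal differences of honest bundles, and then stabilize by trivial bundles to turn the $K^0$ identity into a genuine isomorphism. Your version is slightly more explicit in bookkeeping and adds a remark on the applicability of Mayer--Vietoris, but the argument is the same.
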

\begin{proof}
Exactness of the $K$-theory Mayer-Vietoris sequence
\[ \ldots\to K^0(\Omega)\oplus K^0(B)\to K^0(S)\to K^1(B\cup \Omega)=0\to \ldots \]
shows that the lemma is true for $K$-theory classes,
i.e., there exists $[L]-[\underline{\CC}^k]\in K^0(B)$ and $[F]-[\underline{\CC}^m]\in K^0(\Omega)$
such that in $K^0(S)$
\[ [E]+[L|_S]-[\underline{\CC}^k]=[F|_S]-[\underline{\CC}^m]\]
This means that the vector bundle $E\oplus L|_S$ is stably isomorphic to $F|_S$.
By adding trivial bundles to $F$ and $L$, if needed, we obtain the result.

\end{proof}

\subsection{Bott periodicity}

Bott periodicity is the following statement about the homotopy groups of $\mathrm{GL}(n,\CC)$,
\[ \pi_j\,\mathrm{GL}(n,\CC)=\left\{
\begin{array}{ll}  \ZZ & j\;\mbox{odd}\\ 0& j\;\mbox{even} \end{array}
\right.\]
\[j=0,1,2,\dots, 2n-1\]
\begin{proposition}\label{Bott}
Let $E$ be a $\CC$ vector bundle on $S^{2r}$,
then there exist non-negative integers $l,m$ and an integer $q$ such that
\[ E\oplus \underline{\CC}^l \cong q\beta\oplus \underline{\CC}^m\]
If $q>0$ then $q\beta=\beta\oplus\cdots \oplus\beta$, and if $q<0$ then $q\beta=|q|\beta^*$. 
\end{proposition}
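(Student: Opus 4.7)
The plan is to derive the proposition from Bott periodicity via the clutching construction, using Proposition~\ref{chbeta} to pin down $\beta$ as a generator of reduced $K$-theory. Concretely, the clutching construction identifies isomorphism classes of rank-$n$ complex vector bundles on $S^{2r}$ with $\pi_{2r-1}\mathrm{GL}(n,\CC)$, by trivializing a bundle over the two closed hemispheres $D^{2r}_{\pm}$ and reading off the transition function on the equator. Passing to stable isomorphism classes and letting $n$ grow, the Bott periodicity statement quoted just above the proposition yields $\pi_{2r-1}\mathrm{GL}(\infty,\CC)\cong \ZZ$, so $\widetilde{K}^0(S^{2r})\cong \ZZ$.

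Next I would identify $\beta$ as a generator. Since $\widetilde{K}^0(S^{2r})\cong \ZZ$ is torsion-free, the Chern character is injective on it, and Bott periodicity implies that it carries the generator to the positive integral generator of $H^{2r}(S^{2r},\ZZ)\cong \ZZ$. Proposition~\ref{chbeta} gives $\mathrm{ch}(\beta)[S^{2r}]=1$, hence $[\beta]-[\underline{\CC}^{2^{r-1}}]$ is precisely this generator.

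Given an arbitrary $E$ of rank $n$, there is then a unique $q\in \ZZ$ with $[E]-[\underline{\CC}^n]=q\bigl([\beta]-[\underline{\CC}^{2^{r-1}}]\bigr)$. Rearranging this identity in $K^0(S^{2r})$ and adding enough trivial bundles to both sides converts it into an honest stable isomorphism $E\oplus \underline{\CC}^l\cong q\beta\oplus \underline{\CC}^m$ when $q\ge 0$. To cover $q<0$, I would use that $S^+\oplus S^-$ restricted to $S^{2r}$ is trivial (being the restriction of the trivial spinor bundle of $\RR^{2r+1}$), which stably relates $[\beta^*]$ to $-[\beta]$ and lets one rewrite $q\beta\oplus \underline{\CC}^m$ as $|q|\beta^*\oplus \underline{\CC}^{m'}$ after adding further trivial summands on the left.

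The main obstacle is the integrality claim above, namely that the Chern character sends the Bott generator of $\widetilde{K}^0(S^{2r})$ to the integral generator of $H^{2r}(S^{2r},\ZZ)$ rather than to a nontrivial multiple thereof. I would invoke this as a classical consequence of Bott periodicity; granted it, the remainder of the argument is $K$-theoretic bookkeeping converting equalities in $K^0(S^{2r})$ into stable isomorphisms of genuine vector bundles.
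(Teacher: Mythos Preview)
Your approach is essentially the paper's: clutching identifies rank-$k$ bundles on $S^{2r}$ with $\pi_{2r-1}\mathrm{GL}(k,\CC)$, Bott periodicity gives $\pi_{2r-1}\mathrm{GL}(k,\CC)\cong\ZZ$ stably, and $\beta$ is the generator. The paper's proof is three sentences and stops there; it simply asserts that ``the Bott generator vector bundle $\beta$ corresponds to the generator of $\pi_{2r-1}\mathrm{GL}(2^{r-1},\CC)$,'' treating this as part of the Bott periodicity package rather than something to be deduced. Your detour through Proposition~\ref{chbeta} and the integrality of the Chern character is a legitimate way to pin down which stable class $\beta$ represents, but it is extra scaffolding the paper does not erect, and (as you note) it requires the separate input that $\mathrm{ch}$ carries the Bott generator to an integral generator of $H^{2r}$. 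Your handling of the $q<0$ case and the passage from $K$-theory equalities to honest stable isomorphisms is more explicit than anything in the paper's proof, which leaves those details to the reader.
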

\begin{proof}
Up to isomorphism, $E$ is determined by the homotopy type of a map $S^{2r-1}\to \mathrm{GL}(k,\CC)$, i.e. an element of $\pi_{2r-1} \mathrm{GL}(k,\CC)$. 
Here $S^{2r-1}$ is the equatorial sphere in $S^{2r}$,
and $k$ is the fiber dimension of $E$.
By Bott periodicity, if $k$ is sufficiently large, $\pi_{2r-1} \mathrm{GL}(k,\CC)\cong \ZZ$.
The Bott generator vector bundle $\beta$ corresponds to the generator of $\pi_{2r-1} \mathrm{GL}(2^{r-1},\CC)$.

\end{proof}

\begin{corollary}\label{cor}
Given a pair  $(M,E)$ 
there exists a positive integer $r$ and an integer $q$ 
such that $(M,E)\sim (S^{2r},q\beta)$. 
Here $S^{2r}$ has the Spin$^c$ datum it receives as the boundary of the unit ball in $\RR^{2r+1}$.
\end{corollary}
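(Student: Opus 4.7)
The plan is to string together Lemma~\ref{sphere} with Proposition~\ref{Bott}, then absorb the trivial-bundle summands that Bott periodicity introduces by means of disjoint union/direct sum together with an explicit null-bordism of trivial pairs over the sphere.

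First, apply Lemma~\ref{sphere} to obtain a positive integer $r$ and a $\CC$ vector bundle $F$ on $S^{2r}$ with $(M,E)\sim (S^{2r},F)$. Next, invoke Proposition~\ref{Bott} to produce non-negative integers $l,m$ and an integer $q$ together with an isomorphism of $\CC$ vector bundles $F\oplus \underline{\CC}^l \cong q\beta\oplus \underline{\CC}^m$ on $S^{2r}$. The remainder is a chain of elementary equivalences. The key observation is that $(S^{2r},\underline{\CC}^k)$ bounds: the unit ball $B^{2r+1}\subset \RR^{2r+1}$ carrying the trivial bundle $\underline{\CC}^k$ is a null-bordism of $(S^{2r},\underline{\CC}^k)$ for every $k\ge 0$, and the induced boundary Spin$^c$ structure on $S^{2r}$ is precisely the one stipulated in the corollary. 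Taking the disjoint union of this null-bordism with the trivial cylinder bordism $(S^{2r}\times[0,1],\, F)$ (with $F$ pulled back to $S^{2r}\times[0,1]$) furnishes a bordism between $(S^{2r},F)\sqcup (S^{2r},\underline{\CC}^l)$ and $(S^{2r},F)$. Thus
\[ (S^{2r},F) \sim (S^{2r},F)\sqcup (S^{2r},\underline{\CC}^l) \sim (S^{2r},F\oplus \underline{\CC}^l) \cong (S^{2r}, q\beta\oplus \underline{\CC}^m) \sim (S^{2r},q\beta)\sqcup (S^{2r},\underline{\CC}^m) \sim (S^{2r},q\beta), \]
where the middle two equivalences use the direct sum -- disjoint union rule and the last uses the analogous bordism that removes the trailing trivial pair. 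Composing with $(M,E)\sim (S^{2r},F)$ yields the conclusion.

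No step presents a serious obstacle; the only point requiring care is the verification that adding or removing a trivial summand $(S^{2r},\underline{\CC}^k)$ is a legitimate move under $\sim$, which reduces to the explicit bordism described above. As a minor remark, if $q<0$ then $q\beta$ is to be read as $|q|\beta^\ast$ per Proposition~\ref{Bott}, so the conclusion requires no further reinterpretation.
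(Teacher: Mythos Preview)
Your proof is correct and follows exactly the approach of the paper: apply Lemma~\ref{sphere}, then Proposition~\ref{Bott}, and use that $(S^{2r},\underline{\CC}^k)$ bounds to discard the trivial summands. The paper's proof is terser---it simply asserts that $(S^{2r},\underline{\CC}^l)$ bounds and says the result follows---whereas you have written out the chain of elementary moves explicitly, but the substance is identical.
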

\begin{proof}
For any integer $l$, $(S^{2r}, \underline{\CC}^l)$ bounds.
The corollary now follows by combining the sphere lemma \ref{sphere} with Proposition \ref{Bott}.
\end{proof}

\subsection{The index theorem}

We define two homomorphisms of abelian groups
\[ K_0(\cdot)\to \ZZ\qquad (M,E)\mapsto \Ind\, (D_E)\]
\[ K_0(\cdot)\to \QQ\qquad (M,E)\mapsto (\mathrm{ch}(E)\cup \mathrm{Td}(M))[M]\]
We shall prove  in sections \ref{aind} and \ref{tind} below that these are well-defined homomorphisms, i.e. that each map is compatible with the three elementary moves.

\begin{proposition}\label{isoZ}
The map 
\[K_0(\cdot)\to \ZZ\qquad (M,E)\mapsto \Ind\, (D_E)\]
is an isomorphism of abelian groups.
\end{proposition}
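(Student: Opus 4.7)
The plan is to show $\phi\colon K_0(\cdot)\to\ZZ$, $[(M,E)]\mapsto \Ind(D_E)$, is both surjective and injective, treating as given (from section \ref{aind}) that $\phi$ is a well-defined group homomorphism.

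Surjectivity is immediate from Proposition \ref{index1}: the class $[(S^2,\beta)]$ has $\phi$-value $1$, so the image is a subgroup of $\ZZ$ containing $1$ and therefore equals $\ZZ$.

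For injectivity, suppose $\phi([(M,E)])=0$. By Corollary \ref{cor} one may replace $(M,E)$ by a pair of the form $(S^{2r},q\beta)$, where $r\geq 1$, $q\in\ZZ$, and $S^{2r}$ carries the Spin$^c$ structure inherited as the boundary of the unit ball in $\RR^{2r+1}$. Any pair $(S^{2r},\underline{\CC}^k)$ bounds (extend the bundle trivially over $B^{2r+1}$), so such pairs are zero in $K_0(\cdot)$; in particular $[(S^{2r},0)]=0$, and it suffices to show $q=0$. Using the direct-sum/disjoint-union relation, $\phi$ is additive in the bundle argument. Combined with Proposition \ref{index1} this yields $\phi([(S^{2r},q\beta)])=q$ for $q\geq 0$. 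For $q<0$ one has $q\beta=|q|\beta^*$, and I compute $\phi([(S^{2r},\beta^*)])$ via the stable decomposition of $\beta\oplus\beta^*$ inside $K^0(S^{2r})$: from Proposition \ref{Bott} and $\mathrm{ch}_r(\beta^*)=(-1)^r\mathrm{ch}_r(\beta)$, one sees $\beta\oplus\beta^*$ is stably trivial of rank $2^r$ when $r$ is odd, and stably isomorphic to $\beta^{\oplus 2}$ when $r$ is even. Combined with the vanishing of $\phi$ on stably trivial bundles, additivity gives $\phi([(S^{2r},\beta^*)])=(-1)^r\in\{+1,-1\}$. In both parities $\phi([(S^{2r},q\beta)])\neq 0$ for $q\neq 0$, so $q=0$ as required.

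The main obstacle is computing $\phi([(S^{2r},\beta^*)])$ without circular appeal to the index formula that is the ultimate goal of the paper. The resolution is precisely the stable analysis of $\beta\oplus\beta^*$ just outlined: the ingredients already established---additivity of $\phi$, Proposition \ref{index1}, Proposition \ref{Bott}, and the vanishing of $\phi$ on boundaries---are exactly what is needed to pin down the sign.
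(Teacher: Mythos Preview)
Your proof is correct and follows the same route as the paper: surjectivity from the index-$1$ example $(S^{2r},\beta)$, injectivity by reducing via Corollary~\ref{cor} to $(S^{2r},q\beta)$ and showing $\Ind D_{q\beta}=0$ forces $q=0$.

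The only difference is that you are considerably more careful with the case $q<0$ than the paper, which simply asserts that Proposition~\ref{index1} gives $q=0$. Your Chern-character argument for $\phi([(S^{2r},\beta^*)])=(-1)^r$ is valid, but heavier than needed. From Section~\ref{Bottvb} one has directly that $\beta=(S^+)^*$, hence $\beta^*\cong S^+$; so either $\beta\cong\beta^*$ (when $r$ is even, since then $\beta=S^+$), or $\beta\oplus\beta^*\cong S^-\oplus S^+$ is the trivial bundle (when $r$ is odd). In either case one reads off immediately that $\phi(q\beta)\neq 0$ for $q\neq 0$, without invoking $\mathrm{ch}$ or Proposition~\ref{Bott} again. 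Alternatively, since the embedding dimension in Lemma~\ref{sphere} is at your disposal, you can simply arrange $r$ odd from the start, so that $\phi(q\beta)=q$ for all $q\in\ZZ$.
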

\begin{proof}
Injectivity is a corollary of Bott periodicity. 
To prove injectivity, assume that $\Ind\,(D_E)=0$  for a pair  $(M,E)$.
By Corollary \ref{cor}, 
\[(M,E)\sim (S^{2r}, q\beta)\]
and therefore $\Ind\, D_{q\beta}=0$.
By Proposition \ref{index1} this implies that $q=0$,
i.e. $q\beta$ is the zero vector bundle.
Since $(S^{2r}, 0)$ evidently bounds, $(M,E)\sim 0$.

Bott periodicity is not used in the proof of surjectivity.
Surjectivity follows from the existence of a pair $(M,E)$ with $\Ind\,(D_E)=1$.
For example, $(M,E)=(S^{2r},\beta)$ or $(\CC P^n, \underline{\CC})$.
\end{proof}

\begin{remark}
Proposition \ref{isoZ} says that $\Ind\,D_E$ is a complete invariant for the equivalence relation generated by the three elementary steps,
i.e. $(M,E)\sim (M',E')$ if and only if $\Ind\, (D_E)=\Ind\, (D'_{E'})$.
\end{remark}

We now prove the index Theorem \ref{thm}.

\begin{proof}
Consider the two homomorphism of abelian groups 
\[ K_0(\cdot)\to \ZZ\qquad (M,E)\mapsto \Ind\, (D_E)\]
\[ K_0(\cdot)\to \QQ\qquad (M,E)\mapsto (\mathrm{ch}(E)\cup \mathrm{Td}(M))[M]\]
Since the first one is an isomorphism, 
to show that these two are equal it will suffice to show that they are equal for one example of index 1.

The  Spin$^c$ structure of $S^{2r}$ that it receives as the boundary of the unit ball in $\RR^{2r+1}$
is, in fact, a Spin structure.
Since the tangent bundle $TS^{2r}$ is stably trivial, 
\[\mathrm{Td}(S^{2r})=\hat{A}(S^{2r})=1\]
The Bott generator vector bundle $\beta$ on $S^{2r}$ has the property that
$\mathrm{ch}(\beta)[S^{2r}]=1$,
while also $\Ind\,D_\beta = 1$. This completes the proof.
\end{proof}

\section{The analytic index}\label{aind}

In this section we prove that the homomorphism of abelian groups 
\[ K_0(\cdot)\to \ZZ\qquad (M,E)\mapsto \Ind\, (D_E)\]
is well-defined, i.e. that the three elementary moves are index preserving.
For direct sum - disjoint union this is immediate.

\subsection{Bordism invariance}

There are various proofs of bordism invariance of the analytic index \cite{Pa63}. 
Here we outline a proof of Nigel Higson \cite{Hi91}.

Let $M$ be a compact even dimensional Spin$^c$ manifold that is the boundary of a complete Spin$^c$ manifold $W$, $\partial W=M$.
$W$ is not necessarily compact.
Let $E$ be a smooth $\CC$ vector bundle on $W$.
We may assume that $W$ is the product Spin$^c$ manifold $M\times (-1,0]$ in a neighborhood of the boundary $M$.
Let $W^+$ be the (non-compact and complete) Spin$^c$ manifold $W\cup M\times (0,\infty)$.
$E$ extends in the evident way to $W+$.
\begin{figure}
\includegraphics[width=120mm]{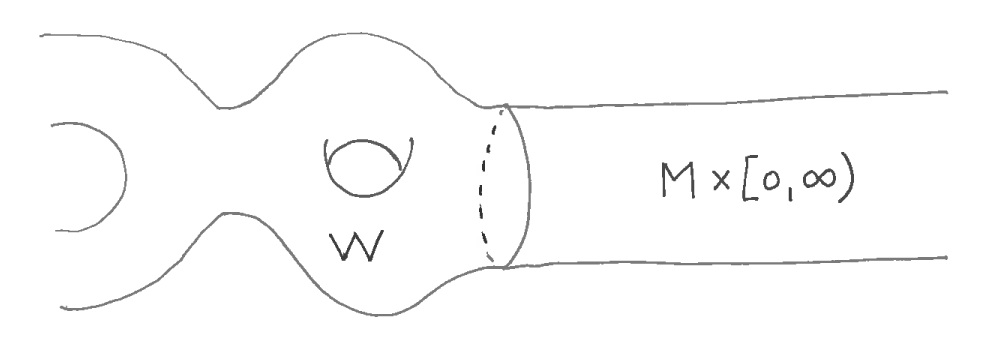}
\end{figure}

Let $D$ be the Dirac operator of $W^+$, and $D_E$ the twisted operator.
Note that $W^+$ is odd-dimensional, so the spinor bundle $S$ of $W+$ is not $\ZZ_2$-graded.

Due to the completeness of $W^+$, $D_E$ is essentially self-adjoint, and the functional calculus applies.
The crucial analytic fact about $D_E$ is that for any $\psi\in C_c^\infty(W^+)$ the operator $\psi(D_E\pm i)^{-1}$ is compact.
This implies that if $\phi\in C^\infty(W^+)$ is locally constant outside of a compact set, then the commutator $[(D_E\pm i)^{-1}, \phi]$ is a compact operator.

Now choose a function $\phi\in C^\infty(W^+)$ such that
\begin{itemize}
\item $\phi=0$ if restricted to  $W$
\item $\phi=1$ if restricted to  $M\times [1,\infty)$.
\end{itemize}
Consider the bounded operator
\[ F_W = I-2i\phi(D_E+i)^{-1}\phi\]
on the Hilbert space $L^2(S|M\times [0,\infty))$.
$F_W$ is a Fredholm operator with parametrix $I+2i\phi(D_E-i)^{-1}\phi$.

If $W$ is replaced by a different complete manifold $V$ with the same boundary $\partial V=M$,
then $F_V$ is a compact perturbation of $F_W$.
If $D_1$ and $D_2$ are the twisted Dirac operators of $W^+$ and $V^+$, then
\begin{align*}
\frac{1}{2i}(F_V-F_W)&= \phi(D_1+i)^{-1}\phi-\phi(D_2+i)^{-1}\phi\\
&=\phi(D_2+i)^{-1}\;((D_2+i)\phi-\phi(D_1+i))\;(D_1+i)^{-1}\phi\\
&=\phi(D_2+i)^{-1}\;(D_2\phi-\phi D_1)\;(D_1+i)^{-1}\phi
\end{align*}
Note that $D_2\phi-\phi D_1$ is a local operator on $M\times [0,\infty)$,
where $D_2=D_1$.
Since $\phi$ is locally constant outside a compact set, the commutator $D_2\phi-\phi D_1$ is Clifford multiplication with a compactly supported section. Therefore $(D_2\phi-\phi D_1)(D_1+i)^{-1}\phi$ is a compact operator.

It follows that $\mathrm{Index}\,F_W=\mathrm{Index}\,F_V$ only depends on $M$.
An elementary calculation shows that if $V=M\times (-\infty,0]$, so that $V^+=M\times \RR$, then $\mathrm{Index}\, F_V$
equals the index of the Dirac operator $D_M$ of $M$ twisted by $E|M$.

Finally, if $W$ is compact, then $\mathrm{Index}\, F=0$.
To see this, consider the direct sum $F\oplus I$ on $L^2(S)=L^2(S|M\times [0,\infty))\oplus L^2(S|W)$,
which is  $I-2i\phi(D_E+i)^{-1}\phi$ (now interpreted as an operator on $L^2(S)$).
If $W$ is compact, then $1-\phi$ is compactly supported,
and  $F\oplus I$ is a compact perturbation of the operator $I-2i(D_E+i)^{-1}=(D_E-i)(D_E+i)^{-1}$ on $W^+$,
which is a unitary (the Cayley transform of $D_E$).

In summary, if $W$ is compact and $M=\partial W$,
then 
\[ \mathrm{Index}\, D_M\otimes (E|M)=\mathrm{Index}\,F=0\]
For details see \cite{Hi91}.

\subsection{Vector bundle modification}

\begin{proposition} Let $(M,E)\sim (\Sigma F, \beta_F\otimes \pi^*E)$ be as in section \ref{vbm}. Then
\[ \mathrm{Index}\, D_{\beta_F\otimes \pi^*E}=\mathrm{Index}\, D_E\]
More precisely, the kernel and cokernel of $D_{\beta_F\otimes \pi^*E}$ have the same dimensions as the kernel and cokernel of $D_E$.
\end{proposition}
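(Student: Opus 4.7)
The plan is to exploit the fibration structure $\pi\,\colon\Sigma F\to M$ to decompose $D_{\beta_F\otimes\pi^*E}^{\Sigma F}$ into a vertical (fiberwise) part and a horizontal part, and to identify the vertical part with the family of twisted Dirac operators $D_\beta\otimes I_{E_p}$ on $S^{2r}$ that were analyzed in Proposition \ref{index1}. Since each fiberwise operator has one-dimensional kernel (canonically identified with the fiber of $E$) and zero cokernel, the transverse part of the spectrum can be ``integrated out'' and what remains is, up to lower-order terms, the operator $D_E$ on $M$.

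Concretely, first I would choose a connection on the principal $\Spinc$ bundle $P$ underlying $F$. This gives a horizontal distribution on $\Sigma F$, and hence an orthogonal splitting $T\Sigma F\cong V\oplus\pi^*TM$, where $V$ is the vertical tangent bundle. Because $V$ is an even-rank Spin$^c$ bundle, its spinor bundle $S^V$ is defined, and the Spin$^c$ structure on $\Sigma F$ is exactly the one that factorizes the spinor bundle as $S^{\Sigma F}\cong S^V\otimes\pi^*S^M$. By the construction of section \ref{Bottvb}, $\beta_F$ is the fiberwise dual of the positive half of $S^V$. With these identifications I would write the Dirac operator on $\Sigma F$ in adapted form
\[ D^{\Sigma F}=D^V\otimes 1+\gamma^V\otimes D^M+R, \]
where $D^V$ is the family of fiberwise Dirac operators, $\gamma^V$ is the vertical grading, and $R$ is a zeroth-order curvature term coming from the connection on $V$. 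Twisting by $\beta_F\otimes\pi^*E$ preserves the form of this decomposition.

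On each fiber over $p\in M$, the vertical twisted operator is $D_\beta\otimes I_{E_p}$ on $S^{2r}$. By Proposition \ref{index1} its kernel is spanned by the canonical section of $S^+\otimes (S^+)^*$ tensored with $E_p$, and its cokernel is zero. Letting $p$ vary, these canonical sections assemble into a smooth bundle isomorphism from the vertical kernel bundle over $M$ to $S^M\otimes E$; the vertical cokernel bundle is identically zero. Restricting the horizontal operator $\gamma^V\otimes D^M+R$ to this vertical kernel then yields $D_E^M$, up to a bundle endomorphism that does not affect the analytic index.

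The main obstacle is upgrading these pointwise facts to a statement about the full operator on $\Sigma F$, since a priori $D^{\Sigma F}$ need not respect the orthogonal splitting of $L^2(\beta_F\otimes\pi^*E)$ into the vertical kernel and its complement. The standard fix is an adiabatic rescaling: replacing the vertical metric $g^V$ by $\epsilon^2 g^V$ and letting $\epsilon\to 0$ pushes the non-kernel fiberwise modes to infinite energy, while the compressed operator on the kernel bundle converges to $D_E^M$. Continuity of the Fredholm index together with a spectral-gap argument in the adiabatic limit then gives equality not only of indices but of the dimensions of kernels and cokernels separately. Verifying that no curvature of $F$ produces an unwanted twist of $D_E$ in this limit --- so that the identification of the vertical kernel bundle with $S^M\otimes E$ is preserved exactly --- is the delicate point, and it is here that the specific definition of $\beta_F$ as the $\Spinc$-equivariant Bott bundle is used essentially.
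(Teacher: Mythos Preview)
Your approach is essentially correct but takes a heavier analytic route than the paper. Both arguments hinge on the same two ingredients: the sharp-product decomposition of the Dirac operator on a fibration, and Proposition~\ref{index1} identifying the fiberwise kernel as the canonical (hence $\Spinc$-invariant) section of $S^+\otimes (S^+)^*$. The difference is in how the globalization is handled.

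You write $D^{\Sigma F}=D^V\otimes 1+\gamma^V\otimes D^M+R$ with an unknown curvature term $R$, and then invoke an adiabatic limit $g^V\mapsto \epsilon^2 g^V$ to push the non-kernel fiberwise modes off to infinity and suppress the off-diagonal coupling. This works, but the adiabatic machinery is more than is needed, and your claim that it yields equality of kernel and cokernel dimensions (not just the index) requires the additional observation that the fiberwise operators have a uniform spectral gap---true here since each fiber is isometric to the round $S^{2r}$, but worth stating.

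The paper sidesteps all of this by exploiting the $\Spinc$-equivariance of $D_\beta$ at the outset rather than as a final check. Because the fiberwise operator is equivariant for the structure group, the family $\mathfrak{D}_\beta$ is globally defined on the associated bundle $\Sigma F$, and one may simply \emph{choose} the Dirac operator on $\Sigma F$ to be the global sharp product $D_E\#\mathfrak{D}_\beta$ (this has the correct principal symbol, and the Dirac operator is only determined up to lower order terms anyway). There is then no curvature remainder $R$ to worry about: the kernel of $D_E\#\mathfrak{D}_\beta$ is computed by pure linear algebra as $\ker D_E\otimes\ker D_\beta$, and the $\Spinc$-invariance of $\ker D_\beta$ ensures the local identifications glue to a global isomorphism with $\ker D_E$. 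The cokernel is handled via $(D_1\#D_2)^*=-D_1^*\#D_2$.

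In short, the insight you place at the end---that the $\Spinc$-equivariance of $\beta$ is what makes everything work---is what the paper uses at the beginning to avoid the adiabatic limit entirely.
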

\begin{proof}
For each point $p\in M$, the fiber $S(F_p\oplus \RR)$ of $\Sigma F$ at $p$ comes equipped with its Dirac operator $D_p$.
Twist $D_p$ by $\beta_F$ restricted to $S(F_p\oplus \RR)$.
Denote the resulting family of elliptic operators by $\mathfrak{D}_\beta$.

Let $M_1$ and $M_2$ be two even dimensional Spin$^c$ manifolds with Dirac operators $D_1$, $D_2$ and spinor bundles $S_1, S_2$.
The Dirac operator of $M_1\times M_2$ is obtained by a standard construction from $D_1$ and $D_2$,
\[ D_{M_1\times M_2}=D_1\# D_2 =
\left(\begin{array}{cc} 
D_1\otimes 1 & -1\otimes D_2^*\\
1\otimes D_2 & D_1^*\otimes 1
\end{array}\right)
\]
where the $2\times 2$ matrix is an operator from $(S_1^+\otimes S_2^+)\oplus (S_1^-\otimes S_2^-)$ to 
$(S_1^-\otimes S_2^+)\oplus (S_1^+\otimes S_2^-)$.
In particular, if $D_2$ has one dimensional kernel and zero cokernel,
then the kernel and cokernel of $D_{M_1\times M_2}$ identify with the kernel and cokernel of $D_1$.

Due to local triviality of the fibration $\Sigma F\to M$, the Dirac operator of ${\Sigma F}$ is obtained by combining, as above, the Dirac operator of $M$ with the family of elliptic operators $\mathfrak{D}$, and therefore
\[ D_{\beta_F\otimes \pi^*E} = D_E \# \mathfrak{D}_\beta\]
Note that for each $p\in M$ $(\mathfrak{D}_\beta)_p$ identifies with $D_\beta$ as in Proposition \ref{index1}.
Choosing a local trivialization of the fibration $\Sigma F\to M$,
locally the kernel of $D_{\beta_F\otimes \pi^*E}$ identifies with the kernel of $D_E$,
and this identification is independent of the choice of local trivialization 
because the kernel of $D_\beta$ is invariant under the structure group Spin$^c(n)$.
Thus, globally as well, the kernel of $D_{\beta_F\otimes \pi^*E}$ identies with the kernel of $D_E$.

Similarly, using 
\[(D_1\# D_2)^* = -D_1^*\# D_2\]
the kernel of the formal adjoint of $D_{\beta_F\otimes \pi^*E}$ identifies with the kernel of the formal adjoint of $D_E$.
Hence the cokernel  of $D_{\beta_F\otimes \pi^*E}$ identifies with the cokernel of $D_E$.
\end{proof}

\section{The topological index} \label{tind}

In this section we prove that the homomorphism of abelian groups 
\[ K_0(\cdot)\to \QQ\qquad (M,E)\mapsto (\mathrm{ch}(E)\cup \mathrm{Td}(M))[M]\]
is well-defined, i.e. that it is compatible with the three elementary moves.
For direct sum - disjoint union this is implied by $\mathrm{ch}(E\oplus F)=\mathrm{ch}(E)+\mathrm{ch}(F)$.

\subsection{Bordism invariance}

The Todd class and Chern character are stable characteristic classes.
Therefore, the cohomology class $\mathrm{ch}(E|_{\partial\Omega})\cup \mathrm{Td}(\partial\Omega)$  is the restriction to the boundary of  $\mathrm{ch}(E)\cup \mathrm{Td}(\Omega)$.
By Stokes' Theorem, the topological index of a boundary is zero.

\subsection{Vector bundle modification}\label{vbmod}

Invariance of the topological index under vector bundle modification is
\[   (\mathrm{ch}(E\otimes \beta_F)\cup \mathrm{Td}(\Sigma F))[\Sigma F]  =(\mathrm{ch}(E)\cup \mathrm{Td}(M))[M]\]
Since
\[T(\Sigma F)\oplus \underline{\RR} \cong \pi^*(TM)\oplus \pi^*F\oplus \underline{\RR}\]
multiplicativity of the Todd class implies
\[ \mathrm{Td}(\Sigma F) = \pi^*\mathrm{Td}(M)\cup \pi^*\mathrm{Td}(F)\]
Therefore invariance of the topological index under vector bundle modification 
is equivalent to
\begin{proposition}\label{prop:vbmod}
\[\pi_!\,\mathrm{ch}(\beta_F) = \frac{1}{\mathrm{Td}(F)}\]
where $\pi_!$ is integration along the fiber of $\pi\,\colon \Sigma F\to M$.
\end{proposition}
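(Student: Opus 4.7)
The approach is to reduce the identity $\pi_!\,\mathrm{ch}(\beta_F) = \mathrm{Td}(F)^{-1}$ to the classical identity $\mathrm{ch}(\lambda_F) = \Phi_H(\mathrm{Td}(F)^{-1})$ for the Spin$^c$ Thom class $\lambda_F \in K^0(F)$ from section \ref{Thom}, where $\Phi_H\colon H^*(M) \to H^{*+2r}_c(F)$ is the cohomology Thom isomorphism. Combined with the standard property $(\pi_F)_! \circ \Phi_H = \mathrm{id}$, this immediately gives $(\pi_F)_!\,\mathrm{ch}(\lambda_F) = \mathrm{Td}(F)^{-1}$. The work is then to transfer this formula from the vector bundle $F \to M$ to the sphere bundle $\Sigma F \to M$.

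First I would use stereographic projection from the south pole section $s_-\colon M \hookrightarrow \Sigma F$, which gives a diffeomorphism $\Sigma F \setminus s_-(M) \xrightarrow{\sim} F$ over $M$. Under this identification, fiber integration on $\pi\colon \Sigma F \to M$ agrees with compactly supported fiber integration on $F \to M$ for any class vanishing on $s_-(M)$. Decompose
\[\mathrm{ch}(\beta_F) = \bigl(\mathrm{ch}(\beta_F) - \pi^* s_-^* \mathrm{ch}(\beta_F)\bigr) + \pi^* s_-^* \mathrm{ch}(\beta_F).\]
The second term is pulled back from $M$, so it vanishes under $\pi_!$. The first term restricts to zero on $s_-(M)$, hence (after a suitable representative choice) yields a class in $H^*_c(F)$ under the stereographic identification. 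Next I would identify this compactly supported class with $\mathrm{ch}(\lambda_F)$, up to the sign arising from the conventions in Proposition \ref{chbeta}. The fiberwise check is that on $S^{2r}$ the virtual class $[\beta] - [\underline{\CC}^{2^{r-1}}]$ corresponds, under $S^{2r}\setminus\{\mathrm{south}\}\cong \RR^{2r}$, to the generator of $K^0_c(\RR^{2r})$ given by the Thom class; the global statement then follows from the $\mathrm{Spin}^c$-equivariance of the associated bundle construction, together with Bott periodicity (Proposition \ref{Bott}) applied fiberwise to identify the appropriate stable class.

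The hardest step is the identification of the compactly supported class built from $\beta_F$ with the Thom class $\lambda_F$. The Thom class is a virtual bundle of rank $0$ whose Clifford multiplication is an isomorphism at infinity, whereas $\beta_F$ is a genuine $2^{r-1}$-dimensional vector bundle; matching them requires tracking how the positive and negative spinor bundles of $F$ patch across the hemisphere decomposition $\Sigma F = D(F) \cup_{S(F)} D(F)$ and recognising the clutching isomorphism as Clifford multiplication. A concrete alternative that bypasses this identification is a direct Chern--Weil computation: globalize the explicit differential form representative of $\mathrm{ch}(\beta)$ from the proof of Proposition \ref{chbeta} using a connection on the principal $\mathrm{Spin}^c$ bundle of $F$, split the resulting curvature into vertical and horizontal components, and then show that the vertical top-form integrates fiberwise to $1$ while the horizontal corrections assemble into $\mathrm{Td}(F)^{-1}$ via the Chern--Weil representative of the Todd class in terms of the Chern roots of $F\otimes\CC$.
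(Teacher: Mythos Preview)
Your strategy is essentially the paper's: pass from $\beta_F$ on $\Sigma F$ to a $K$-theoretic Thom class on $F$ via clutching/excision, and then combine the cohomological Thom isomorphism with the character formula for $\mathrm{ch}(S^+_F)-\mathrm{ch}(S^-_F)$ to obtain $\mathrm{Td}(F)^{-1}$. Your ``hardest step'' is exactly the content of Lemma~\ref{Thom_clutch}, which the paper proves by an explicit homotopy of grading operators on $S^n$ rather than by a fiberwise appeal to Bott periodicity.

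One genuine slip: with the definitions of Section~\ref{Thom}, the identity $(\pi_F)_!\,\mathrm{ch}(\,\cdot\,)=\mathrm{Td}(F)^{-1}$ holds for the \emph{conjugate} class $\tau_F$, not for $\lambda_F$; the paper's character computation gives $\kappa(\mathrm{ch}(\lambda_F))=(-1)^r e^{x/2}\prod_j(e^{x_j/2}-e^{-x_j/2})$, and only after conjugating to $\tau_F$ does this become $\chi(F)\cdot\mathrm{Td}(F)^{-1}$. Consistently, Lemma~\ref{Thom_clutch} shows that $\beta_F$ clutches to $\tau_F$, not to $\lambda_F$. Your two uses of $\lambda_F$ are therefore each individually wrong with the paper's conventions, even though the errors would cancel; since the whole point of the paper's treatment here is to get these signs right, you should replace $\lambda_F$ by $\tau_F$ throughout.
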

\begin{proof}
Denote by $\kappa:H^\bullet(BF,SF)\to H^\bullet(M)$ the map which is the composition
\[ H^\bullet(BF, SF)\to H^\bullet(BF,\emptyset)=H^\bullet(BF)\cong H^\bullet(M)\]
Then integration in the fiber has the property
\[ \pi_!(a)\cup\chi(F)=\kappa(a)\qquad \qquad a\in H^\bullet(BF,SF)\]
where $\chi(F)$ is the Euler class of $F$.
With $\lambda_F$ as in section \ref{Thom},
\[ \kappa(\mathrm{ch}(\lambda_F)) = \mathrm{ch}(S^+_F)- \mathrm{ch}(S^-_F)\]
The difference of Chern characters
\[ t(F) := \mathrm{ch}(S_F^+)-\mathrm{ch}(S_F^-)\in H^\bullet(M,\RR)\]
is a characteristic class of the Spin$^c$ vector bundle $F$.
A  character calculation shows that this class $t$ is
\[ t=(-1)^r e^{x/2}\prod_{j=1}^r (e^{x_j/2}-e^{-x_j/2})\]
where $x_1,\dots,x_r$ are the Pontryagin roots, and $x=c_1$.
(See \cite{Mi63} for the calculation in the Spin case. See \cite{BoSe} for the calculation in the case of a complex vector bundle). 

The Thom class $\tau_F$ is dual (conjugate) to $\lambda_F$. Replacing $x$ by $-x$ and $x_j$ by $-x_j$ we obtain
\[\kappa(\mathrm{ch}(\tau_F)) = \chi(F)\cup  \frac{1}{\mathrm{Td}(F)}\]
and therefore
\[\pi_!(\mathrm{ch}(\tau_F)) = \frac{1}{\mathrm{Td}(F)}\]
Finally, by Lemma \ref{Thom_clutch} below, $\beta_F$ is isomorphic to the vector bundle on $\Sigma F$ obtained by clutching $\tau_F$, i.e.
\begin{align*}
\Sigma F &=S(F\oplus \underline{\RR}) = \Sigma^+\cup_{S(F)} \Sigma^-\\
\beta_F&=\pi^*\overline{S_F^+}\cup_c \,  \pi^*\overline{S_F^-}
\end{align*}
which implies
\[\pi_!(\mathrm{ch}(\beta_F)) = \pi_!(\mathrm{ch}(\tau_F))\]

\end{proof}

\begin{remark}

Let $M$ and $N$ be Spin$^c$ manifolds without boundary,  not necessarily compact.
Let $f:M\to N$ be a $C^\infty$ map.
The Gysin map in compactly supported $K$-theory
\[ f_!:K^j(M)\to K^{j+\epsilon}(N)\]
(where $\epsilon=\mathrm{dim}\, M - \mathrm{dim}\, N$) is defined as follows.
Choose an embedding $\iota:M\to \RR^n$ with $n=\epsilon$ modulo 2.
Consider  the embedding
\[ (f,\iota)\;\colon\; M\hookrightarrow N\times \RR^n \qquad p\mapsto (f(p),\iota(p))\]
The normal bundle $\nu$ of this embedding $(f,\iota)$ has even dimensional fibers.
Via the short exact sequence
\[ 0\to TM\to T(N\times \RR^n)|M \to \nu\to 0\]
and the 2 out of 3 lemma (Lemma \ref{2outof3}), $\nu$ is a Spin$^c$ vector bundle.
Let $\tau$ denote the Thom class of $\nu$.
Then $f_!$ is the composition of the Thom isomorphism $K^j(M)\cong K^j(\nu)$
with excision $K^j(\nu)\to K^j(N\times \RR^n)$, and finally the K\"unneth theorem (i.e. Bott periodicity) $K^j(N\times \RR^n)\cong K^{n+\epsilon}(N)$.

The differentiable Riemann-Roch theorem states that there is commutativity in the diagram
\[ \xymatrix{   K^j(M) \ar[r]^-{f_!}\ar[d]_{\mathrm{ch}(-)\cup \mathrm{Td}(M)} & K^{j+\epsilon}(N) \ar[d]^{\mathrm{ch}(-)\cup \mathrm{Td}(N)}& \\
 H_c^\bullet(M,\QQ)\ar[r]_{f_!} & H_c^\bullet(N,\QQ)}
\]
where the lower horizontal arrow is the Gysin map in compactly supported cohomology.

Commutativity of this diagram is proven by applying the formula
\[\pi_!(\mathrm{ch}(\tau)) = \frac{1}{\mathrm{Td}(\nu)}\]
of Proposition \ref{prop:vbmod}, and is a modern version of Theorem 1 of Atiyah and Hirzebruch in \cite{AH59}.

\end{remark}

\section{Appendix: Sign conventions}

In this appendix we make explicit the sign conventions used in this paper.
Mathematicians who work in index theory are well aware that resolving sign issues can be a rather difficult problem.
In a section on sign conventions \cite{Pa63} p.281, R.\ S.\ Palais and R.\ T.\ Seeley remarked,
\begin{quote}
The definition of the topological index is replete with arbitrary choices, and a change in any one of these choices calls for a compensating multiplication by a power of $-1$.
Since this has caused so many headaches already, it is perhaps in order to make these various choices explicit and remark on the effect of a change in each of them.
\end{quote}
In \cite{APS3} p.73, Atiyah, Patodi and Singer remark
\begin{quote}
One final comment concerns the question of signs. 
There are many places where sign conventions enter and although we have endeavoured to be consistent we have not belaboured the point. 
Any error in sign in the final results will more readily be found by computing examples than by checking all the intermediate stages.
\end{quote}
For the present paper, the sign conventions we have used are {\em not} arbitrary, but rather are dictated by the underlying mathematics. 
In this spirit we now explain our choices of
\begin{itemize}
\item The grading of the spinor bundle
\item The orientation and Spin$^c$ structure of a boundary
\item The orientation and Spin$^c$ structure of a complex vector bundle
\item The Bott generator
\item The Thom class
\end{itemize}
All definitions and sign conventions involving characteristic classes are as in \citelist{\cite{MilSta} \cite{Hirz}}.
Our guiding principle is that the final index formula should be sign free.

\subsection{Orientation and Spin$^c$ structure of a boundary}
If $\Omega$ is an oriented manifold with boundary $M$,
then $M$ is oriented via the rule ``exterior normal first''.
This convention gives Stokes' Theorem free of signs \citelist{\cite{Mi65} \cite{BT}}. 

If $\Omega$ is an odd dimensional Spin$^c$ manifold with boundary $M$, 
then the spinor bundle of $M$ is the restriction of the spinor bundle of $\Omega$ to $M$,
graded by the Clifford action $ic({\bf n})$, where ${\bf n}$ is the outward pointing normal vector.
See section \ref{spincm}.
This convention is consistent with the choice of irreducible representation of the Clifford algebra $\mathrm{Cliff}(n)$ if $n$ is odd, 
implicit in the matrices $E_1,\dots, E_n$ of section \ref{Dirac_Rn}.
It  gives the correct grading of the spinor bundle on the even dimensional manifold $M$.

\subsection{Positive and negative spinors}
For a closed Spin manifold $M$ with Dirac operator $D$  
\[ \mathrm{Index}\; D = \widehat{A}(TM)[M]\]
This formula is free of signs provided that ``correct'' choices have been made for the positive and negative spinors.
In distinguishing the positive and negative spinor representations of $\mathrm{Spin}(2r)$ we are following Milnor \cite{Milnor}.
This is compatible with our choice of grading operator 
$i^rE_1E_2\cdots E_{2r}$
in section \ref{Dirac_Rn}.

\subsection{Complex structures and Spin$^c$ structures}\label{complex}
For a closed complex analytic manifold $M$ with Dirac operator $D$  
\[ \mathrm{Index}\; D = \mathrm{Td}(T^{1,0}M)[M]\]
This formula is valid if the Dirac operator is the assembled Dolbeault complex of $M$.
This is the case if a ``correct'' convention is used for the Spin$^c$ structure determined by the complex analytic structure.
To verify this, we need to review some details about Clifford algebras and the spinor representation.

As in section \ref{strgrp}
\[\Spinc = \Spin \times_{\ZZ/2\ZZ} U(1)\]
There is a 2-to-1 covering homomorphism
\begin{align*}
 \Spinc &\to \SO \times U(1)\\
 (g,\lambda) &\mapsto (\rho(g), \lambda^2)\qquad g\in \Spin,\; \lambda\in U(1)
\end{align*}
The Lie algebra representation that corresponds to the spin representation of $\Spinc$ is
\[ \Lambda^2\to M_{2^r}(\CC)\;\colon\; e_ie_j\mapsto E_iE_j\]
which is also given by
\[ \so(n)\to M_{2^r}(\CC)\;\colon\; J_{ij}\mapsto \frac{1}{2}E_iE_j\]
If $n=2r$ is even, we fix the identification of $\CC^r$ with $\RR^n$ as 
\[ \CC^r\to \RR^n\;\colon\; (z_1,\dots,z_r)\mapsto (x_1,y_1,\dots, x_r,y_r)\qquad z_j=x_j+iy_j\]
What is significant  is the implied orientation of $\CC^r$.
If $\{f_1, \dots, f_r\}$ is any basis of $\CC^r$,
the real vector space underlying  $\CC^r$ is oriented by the $\RR$ basis
\[ \{f_1, Jf_1, f_2, Jf_2, \dots , f_r, Jf_r\}\]
This convention for orienting complex vector spaces is compatible with direct sums.

The identification $\CC^r\to \RR^n$ induces an inclusion $M_r(\CC)\subset M_n(\RR)$, which restricts to a homomorphism
\[ j\;\colon\; U(r)\to SO(n)\]
The canonical homomorphism 
\[ \tilde{l}\;\colon\;U(r)\to \Spinc\]
is, by definition, the unique lift to $\Spinc$ of
\[ l\;\colon\; U(r)\to SO(n)\times U(1)\;\colon \; l(g):=j(g)\times \det{(g)}\]
For a diagonal matrix $T\in U(r)$  the canonical map to $\Spinc$ can be made explicit as follows.
(An equivalent formula for $\tilde{l}(T)$ appears, without proof, in [1] \textsection 3).

\begin{lemma}\label{lift}
The canonical inclusion $\tilde{l}\;\colon\;U(r)\to \Spinc$ maps the diagonal matrix 
$T=\mathrm{diag}(\lambda_1,\dots, \lambda_r)\in U(r)$
to the element
\[\tilde{l}(T) = \prod_{j=1}^r \left(\frac{1}{2}(1+\lambda_j)+\frac{1}{2}(1-\lambda_j)\,(ie_{2j-1}e_{2j})\right)\in \Spinc\subset C_n\otimes \CC\]
\end{lemma}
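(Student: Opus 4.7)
The plan is to verify the formula by reducing to the rank one case (one diagonal entry at a time) and then, in that case, writing the right hand side explicitly as $\mu g$ where $g \in \Spin \subset C_n$ and $\mu \in U(1)$ are chosen so that $\rho(g) = j(T)$ and $\mu^2 = \det(T)$. Since $\tilde l$ is the unique continuous lift of $l \colon U(r) \to \SO \times U(1)$ to $\Spinc = \Spin \times_{\mathbb{Z}/2\mathbb{Z}} U(1) \subset C_n \otimes \CC$, matching $l$ after covering and starting from $\tilde l(I) = 1$ determines $\tilde l(T)$.

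First I would reduce to the rank one case. Write $T = T_1 T_2 \cdots T_r$ where $T_j$ is the diagonal matrix with $\lambda_j$ in the $j$-th slot and ones elsewhere. The $T_j$ pairwise commute, and the claimed factors $f_j := \frac{1}{2}(1+\lambda_j) + \frac{1}{2}(1-\lambda_j)(i e_{2j-1}e_{2j})$ also pairwise commute inside $C_n \otimes \CC$ because $e_{2j-1}e_{2j}$ and $e_{2k-1}e_{2k}$ involve disjoint Clifford generators. Since $\tilde l$ is a group homomorphism, it suffices to show $\tilde l(T_j) = f_j$ for each $j$.

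Next I would handle the rank one case. Fix $j$ and write $\lambda = \lambda_j = e^{i\theta}$; drop the subscript. Under $l$, the matrix $T_j$ maps to $(R_\theta, e^{i\theta})$, where $R_\theta \in \SO$ is rotation by angle $\theta$ in the $(2j-1,2j)$-plane (other coordinates fixed). Using the identification $d\rho\colon \tfrac{1}{2}e_{2j-1}e_{2j} \mapsto J_{2j-1,2j}$ from section \ref{strgrp} and the relation $(e_{2j-1}e_{2j})^2 = -1$, exponentiation in $C_n$ gives
\[
g := \exp\!\bigl(\tfrac{\theta}{2}\,e_{2j-1}e_{2j}\bigr) = \cos(\theta/2) + \sin(\theta/2)\, e_{2j-1}e_{2j} \in \Spin,
\]
with $\rho(g) = R_\theta$. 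Taking $\mu = e^{i\theta/2} \in U(1)$ gives $\mu^2 = e^{i\theta} = \det(T_j)$, so $(g,\mu)$ covers $(R_\theta, e^{i\theta})$ inside $\Spin \times U(1)$. The embedding $\Spinc \hookrightarrow C_n \otimes \CC$ sends $(g,\mu)$ to the product $\mu g \in C_n \otimes \CC$. A direct computation using $e^{i\theta/2}\cos(\theta/2) = \tfrac{1}{2}(1+e^{i\theta})$ and $e^{i\theta/2}\sin(\theta/2) = \tfrac{i}{2}(1 - e^{i\theta})$ yields
\[
\mu g \;=\; \tfrac{1}{2}(1+\lambda) + \tfrac{1}{2}(1-\lambda)(i\,e_{2j-1}e_{2j}) \;=\; f_j.
\]

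Finally, to conclude $\tilde l(T_j) = f_j$ rather than its negative, I would use continuity: $f_j$ depends continuously on $\theta$, equals $1$ at $\theta = 0$, and (by construction) projects onto $l(T_j)$ under the covering $\Spinc \to \SO \times U(1)$. Since $U(r)$ is connected and $\tilde l$ is the unique continuous lift through the base point $1$, this forces equality. The main obstacle is purely bookkeeping: tracking the factor of $i$ that appears when one embeds $\Spin \times U(1)$ into the complexified Clifford algebra, and handling the $\pm 1$ ambiguity of the covering correctly (resolved by the base point argument). Once the rank one identity is verified, multiplicativity and the commutation of the $f_j$ immediately deliver the general formula.
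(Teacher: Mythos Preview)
Your proof is correct and follows essentially the same line as the paper's: both arguments exponentiate $\tfrac{1}{2}\theta_j e_{2j-1}e_{2j}$ in $\Spin$, pair it with the square root $e^{i\theta_j/2}\in U(1)$, multiply inside $C_n\otimes\CC$, and simplify via the identities $e^{i\theta/2}\cos(\theta/2)=\tfrac12(1+e^{i\theta})$ and $e^{i\theta/2}\sin(\theta/2)=\tfrac{i}{2}(1-e^{i\theta})$. The only difference is packaging: the paper handles all $r$ factors simultaneously by lifting the one-parameter subgroup $s\mapsto\mathrm{diag}(e^{is\theta_1},\dots,e^{is\theta_r})$ (which automatically resolves the sign ambiguity), whereas you first reduce to rank one via $T=T_1\cdots T_r$ and then invoke continuity at $\theta=0$ explicitly to pin down the sign.
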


\begin{proof}
Let $\lambda_j=e^{i\theta_j}$, and consider the one-parameter subgroup
\[ \mathrm{diag}(e^{is\theta_1},\dots, e^{is\theta_r})\in U(r)\qquad s\in \RR\]
The canonical inclusion of this one parameter group in $\SO$ is
\[ j(\mathrm{diag}(e^{is\theta_1},\dots, e^{is\theta_r})) = \prod_{j=1}^r \exp(s\theta_j J_{2j-1,2j})\in \SO\]
To  lift  it to $\Spin$,
we use the isomorphism of Lie algebras $d\rho\;\colon\Lambda^2\cong \so(n)$.
The skew symmetric matrix $J_{ij}\in \so(n)$ corresponds to $\frac{1}{2}e_ie_j\in \Lambda^2\subset C_n$.
Therefore the one-parameter group lifts to
\[\prod_{j=1}^r \exp(\frac{1}{2}s\theta_j e_{2j-1}e_{2j})\in \Spin\]
On the other hand,  the determinant of the one-parameter group
\[ \det(\mathrm{diag}(e^{is\theta_1},\dots, e^{is\theta_r})) = \prod_{j=1}^r e^{is\theta_j}\in U(1)\] 
 lifts to the square root
\[ \prod_{j=1}^r e^{is\theta_j/2} \in U(1)\]
Evaluating at $s=1$ we see that  
\[ \tilde{l}(T)=
 \prod_{j=1}^r e^{i\theta_j/2}\,\exp(\frac{1}{2}\theta_j e_{2j-1}e_{2j})\in \Spinc\subset C_n\]
From $(e_ie_j)^2=-I$ (if $i\ne j$) we obtain
\[ \exp(te_ie_j)=\cos{(t)}+\sin{(t)} e_ie_j = \frac{1}{2}(e^{-it}+e^{it})+\frac{1}{2}(e^{-it}-e^{it})(ie_ie_j)\] 
and so
\[e^{i\theta_j/2}\,\exp(\frac{1}{2}\theta_j e_{2j-1}e_{2j})
=\frac{1}{2}(1+e^{i\theta_j})+\frac{1}{2}(1-e^{i\theta_j})\,(ie_{2j-1}e_{2j})\]

\end{proof}

\begin{proposition}\label{spinors}
The representation of $U(r)$ on the spinor vector bundle $\CC^{2^r}$,
obtained by composition of the canonical map $U(r)\to \Spinc$ with the spinor representation of $\Spinc$, is unitarily equivalent to the standard representation of $U(r)$ on $\Lambda^\bullet \CC^r$.
\end{proposition}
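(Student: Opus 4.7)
The plan is to show that both representations have the same character on the maximal torus $T\subset U(r)$ of diagonal matrices, from which unitary equivalence follows at once: $U(r)$ is compact and connected, so characters determine representations; $\Lambda^\bullet \CC^r$ is unitary for the induced Hermitian structure; and the spin representation of $\Spinc$ is unitary, so its pullback along $\tilde{l}\colon U(r)\to\Spinc$ is as well. Both representations clearly have dimension $2^r$.

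The character of the standard representation on $\Lambda^\bullet \CC^r$ at $T=\mathrm{diag}(\lambda_1,\dots,\lambda_r)$ is the well-known product $\prod_{j=1}^r (1+\lambda_j)$, obtained by summing, over subsets $S\subseteq\{1,\dots,r\}$, the contribution $\prod_{j\in S}\lambda_j$ from the basis vector indexed by $S$. So the task is to compute the same character on the spin side.

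For the spin representation, the key step is to diagonalize simultaneously the commuting family $A_j:=iE_{2j-1}E_{2j}$ acting on $\CC^{2^r}$. Each $A_j$ is self-adjoint (using $E_k^*=-E_k$), squares to $I$, hence has eigenvalues $\pm 1$; and $A_jA_k=A_kA_j$ for $j\ne k$ because the index pairs are disjoint. This gives a joint eigendecomposition $\CC^{2^r}=\bigoplus_{\epsilon\in\{\pm 1\}^r}V_\epsilon$. I would then argue that each $V_\epsilon$ is one-dimensional as follows: $E_{2k-1}$ anticommutes with $A_k$ and commutes with $A_j$ for $j\ne k$, so it restricts to an isomorphism $V_\epsilon\to V_{\epsilon'}$, where $\epsilon'$ differs from $\epsilon$ only in the $k$-th sign. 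This yields a transitive action of $(\ZZ/2\ZZ)^r$ on the set of joint eigenspaces; combined with the fact that the $V_\epsilon$ sum to a $2^r$-dimensional space, it forces $\dim V_\epsilon = 1$ for every $\epsilon$.

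With this decomposition in hand, Lemma \ref{lift} shows that $\tilde{l}(T)$ acts on $V_\epsilon$ by the scalar
\[
\prod_{j=1}^r \left(\tfrac{1}{2}(1+\lambda_j)+\tfrac{1}{2}(1-\lambda_j)\epsilon_j\right),
\]
which equals $1$ when $\epsilon_j=+1$ and equals $\lambda_j$ when $\epsilon_j=-1$. Summing over $\epsilon$, or equivalently over the subsets $S=\{j:\epsilon_j=-1\}$, recovers exactly $\prod_{j=1}^r(1+\lambda_j)$, matching the character of $\Lambda^\bullet \CC^r$, and completes the argument. The step I expect to require the most care is the verification that each joint eigenspace $V_\epsilon$ is one-dimensional; the anticommutation trick with $E_{2k-1}$ handles it cleanly, but an alternative route would be to build an explicit intertwiner via the Fock model of the spin representation, identifying $V_\epsilon$ with $\CC\cdot(f_{i_1}\wedge\cdots\wedge f_{i_k})$ for $\{i_1<\cdots<i_k\}=\{j:\epsilon_j=-1\}$.
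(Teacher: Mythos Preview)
Your proof is correct and follows the same overall strategy as the paper: compare characters on the diagonal torus using the explicit formula for $\tilde{l}(T)$ from Lemma~\ref{lift}. The one difference is in how you evaluate the trace on the spin side. You simultaneously diagonalize the commuting involutions $A_j=iE_{2j-1}E_{2j}$, show each joint eigenspace is one-dimensional via the anticommutation trick, and then sum the resulting eigenvalues. The paper instead expands the product $\prod_j\bigl(\tfrac12(1+\lambda_j)+\tfrac12(1-\lambda_j)\,iE_{2j-1}E_{2j}\bigr)$ and invokes the single observation that every nontrivial Clifford monomial $E_{i_1}\cdots E_{i_p}$ is traceless, so only the identity term survives and contributes $2^r\prod_j\tfrac12(1+\lambda_j)=\prod_j(1+\lambda_j)$. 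Your route is a bit longer but yields more: it exhibits the full weight decomposition of the spin representation under the maximal torus of $U(r)$, which is exactly what one would want for the follow-up Proposition~\ref{grading}. The paper's route is a one-line shortcut if all you need is the character.
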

\begin{proof}
The trace of all elements $E_{i_1}E_{i_2}\cdots E_{i_p}$ with $i_1<\cdots<i_p$ is zero (for $p=1,\dots,n)$, while $\mathrm{tr}(I)=2^r$.
From Lemma \ref{lift} we see that the character $\chi$ of the spinor representation  $U(r)\to M_{2^r}(\CC)$ is
\[ \chi(\mathrm{diag}(\lambda_1,\dots, \lambda_r)) = \prod_{j=1}^r (1+\lambda_j)\]
The character of the representation of $U(r)$ on 
\[ \Lambda^\bullet \CC^r=\CC\oplus \CC^r\oplus \Lambda^2\CC^r\oplus\cdots \oplus \Lambda^r\CC^r\]
is
\[ 1 + \sum_j \lambda_j + \sum_{j_1<j_2} \lambda_{j_1}\lambda_{j_2}+\cdots + \prod \lambda_j\] 
which equals $\chi$.

\end{proof}

Recall that for $n=2r$ even, the spinor vector space $\CC^{2^r}$ splits into the $\pm 1$ eigenspaces of the matrix
\[ \omega  = i^r E_1E_2\cdots E_n\in M_{2^r}(\CC)\]

\begin{proposition}\label{grading}
Under the identification of the spinor vector bundle $\CC^{2^r}$ with the $U(r)$ representation space $\Lambda^\bullet \CC^r$,  positive spinors are identified with even forms, and negative spinors with odd forms.
\end{proposition}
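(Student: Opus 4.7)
The plan is to decompose $\CC^{2^r}$ into the joint eigenspaces of the commuting involutions $A_j := iE_{2j-1}E_{2j}$, $j = 1, \ldots, r$, match these eigenspaces to weight vectors of $\Lambda^\bullet \CC^r$ using Lemma \ref{lift}, and then express the grading operator $\omega$ as $\prod_j A_j$. First I would check, using only the Clifford relations $E_j^2 = -I$ and $E_j E_k = -E_k E_j$ for $j \neq k$, that each $A_j$ squares to $I$ and that $A_j A_k = A_k A_j$ for $j \neq k$ (four anticommutations). Hence $\CC^{2^r}$ decomposes into $2^r$ simultaneous one-dimensional eigenspaces, indexed by $\epsilon = (\epsilon_1, \ldots, \epsilon_r) \in \{\pm 1\}^r$.

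Next I would read off the $U(r)$ weights from Lemma \ref{lift}. For $T = \mathrm{diag}(\lambda_1, \ldots, \lambda_r)$, the spin representation of $\tilde{l}(T)$ equals $\prod_j \bigl(\tfrac{1}{2}(1+\lambda_j) + \tfrac{1}{2}(1-\lambda_j) A_j\bigr)$, which on the $\epsilon$-eigenspace acts by the scalar $\prod_{j\,:\,\epsilon_j = -1} \lambda_j$. This is exactly the weight of the monomial $e_{i_1} \wedge \cdots \wedge e_{i_k} \in \Lambda^k \CC^r$ with $\{i_1, \ldots, i_k\} = \{j : \epsilon_j = -1\}$. Because both $\CC^{2^r}$ and $\Lambda^\bullet \CC^r$ are multiplicity-free under the diagonal torus of $U(r)$, the $U(r)$-equivariant isomorphism provided by Proposition \ref{spinors} must send each $\epsilon$-eigenvector to a scalar multiple of this monomial; in particular the form degree of the image equals the number of $j$ with $\epsilon_j = -1$.

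Finally I would rewrite $\omega$ in terms of the $A_j$. The factors $E_{2j-1} E_{2j}$ pairwise commute (again four anticommutations), so
\[ \omega \;=\; i^r E_1 E_2 \cdots E_{2r} \;=\; i^r \prod_{j=1}^r E_{2j-1} E_{2j} \;=\; i^r \cdot (-i)^r \prod_{j=1}^r A_j \;=\; \prod_{j=1}^r A_j. \]
On the $\epsilon$-eigenspace, $\omega$ therefore acts by $\prod_j \epsilon_j = (-1)^k$, where $k$ is the form degree obtained in step two. This shows that $\Lambda^{\mathrm{even}} \CC^r$ is the $+1$ eigenspace of $\omega$ and $\Lambda^{\mathrm{odd}} \CC^r$ is the $-1$ eigenspace, which is the claim.

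The main obstacle, minor as it is, lies in the bookkeeping of step two: the identification of Proposition \ref{spinors} was stated only at the level of characters, so one must justify passing to the level of individual weight vectors. This is automatic because every weight of $\Lambda^\bullet \CC^r$ has multiplicity one under the diagonal torus of $U(r)$, so any $U(r)$-equivariant isomorphism is determined up to nonzero scalars on each weight line, and these scalars are irrelevant for identifying which summand $\Lambda^k \CC^r$ each $\epsilon$-eigenvector lies in.
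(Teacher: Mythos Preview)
Your proof is correct, but the paper's argument is considerably shorter because it uses only a single element of $U(r)$ rather than the full torus.  The paper evaluates Lemma~\ref{lift} at $T=-I=\mathrm{diag}(-1,\dots,-1)$, obtaining $\tilde{l}(-I)=\prod_{j}ie_{2j-1}e_{2j}$, whose image in the spinor representation is exactly the grading operator $\omega$.  Since $-I\in U(r)$ acts on $\Lambda^k\CC^r$ by $(-1)^k$, the $U(r)$-equivariance of the isomorphism from Proposition~\ref{spinors} immediately identifies the $\pm 1$ eigenspaces of $\omega$ with even and odd forms.  Your route---decomposing $\CC^{2^r}$ into joint eigenspaces of all the $A_j$ and matching each one to a specific monomial in $\Lambda^\bullet\CC^r$---yields a finer statement, but at the cost of the multiplicity-free bookkeeping you flag at the end.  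The paper's use of the central element $-I$ sidesteps that issue entirely: no weight-by-weight matching is needed, only the observation that an equivariant isomorphism intertwines the action of any single group element.
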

\begin{proof}
Lemma \ref{lift} shows that the canonical inclusion $U(r)\to \Spinc$ maps the matrix 
\[ -I = \mathrm{diag}(-1, \dots, -1)\in U(r)\]
to
\[ \prod_{i=1}^r ie_{2j-1}e_{2j}\in \Spinc\subset C_n\otimes \CC\]
which, in the spinor representation, maps to the grading operator $\omega$.
The matrix $-I\in U(r)$ acts as $+1$ on even tensors, and as $-1$ on odd tensors.

\end{proof}

Thus, the above map $\tilde{l}\;\colon\;U(r)\to \Spinc$ determines the ``correct'' Spin$^c$ structure for a complex analytic manifold $M$.
The Dirac operator of $M$ is indeed the assembled Dolbeault complex.

\subsection{The Bott generator and the Thom class}

Let $n=2r$ be an even positive integer.
The Bott generator vector bundle $\beta$ on $S^{n}$  is the dual of the positive spinor bundle.
This choice is such that $\mathrm{ch}(\beta)[S^{n}]=1$ (see section \ref{Bottvb}),
while also $\mathrm{Index} D_\beta = 1$, which means that, when $[\beta]\in K^0(S^{n})$ is evaluated against the fundamental cycle in $K$-homology (i.e. the Dirac operator), the result is 1 (see section \ref{indexone}).

The $K$-theory group $K^0(\RR^n)$ is an infinite cyclic group.
We choose as the Bott generator of $K^0(\RR^n)$ the element that, when ``clutched'', gives the Bott generator vector bundle on $S^n$.
The clutching map
\[ c\;\colon\; K^0(\RR^n)\to K^0(S^n)\]
is defined as follows.
Let $[E,F,\sigma]\in K^0(\RR^n)$, where $E, F$ are two $\CC$ vector bundles on $\RR^n$, and  $\sigma:E\to F$ is   a vector bundle map that is an isomorphism outside a compact set. We may assume that $\sigma$ is an isomorphism on the unit sphere and outside the unit ball.

$S^n$ is the unit sphere in $\RR^{n+1}$.
Let $S^n_+$ be the upper hemisphere (with $x_{n+1}\ge 0$) and $S^n_-$ the lower hemisphere of $S^n$ (with $x_{n+1}\le 0$).
Then $c(E,F,\sigma)$ is the vector bundle on $S^n$ obtained by clutching $S^n_+\times E$ to $S^n_-\times F$ via $\sigma$,
\[ c(E,F,\sigma) = S^n_+\times E\;\cup_\sigma\;S^n_-\times F\]
For $x=(x_1,\dots,x_n)\in \RR^n$, let $\sigma(x)$ be the  matrix
\[  \sigma(x)=\sum_{j=1}^n ix_jE_j\in M_{2^r}(\CC)\]
We denote $\Delta=\CC^{2^r}$, and $\Delta=\Delta^+\oplus \Delta^-$  is the decomposition into $\pm 1$ eigenspaces of the grading operator $i^rE_1E_2\cdots E_n=iE_{n+1}$.
If $x\ne 0$, then  $\sigma(x)$ restricts to a linear isomorphism
\[ \sigma(x)\;\colon \Delta^+\to \Delta^-\]

\begin{lemma}\label{Thom_clutch}
$c(\RR^n\times \Delta^+,\RR^n\times \Delta^-,\sigma)$ is isomorphic to the positive spinor bundle of $S^n$. 
\end{lemma}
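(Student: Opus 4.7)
The strategy is to identify $S^+$ by writing down explicit trivializations over the two hemispheres---by $\Delta^+$ on the upper and $\Delta^-$ on the lower---and then verifying that the transition along the equator is exactly multiplication by $\sigma$.

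Following Section \ref{spincm}, the spinor bundle of $S^n\subset\RR^{n+1}$ is the restriction of the trivial bundle on $B^{n+1}$ with fiber $\CC^{2^r}$, graded fibrewise by $i$ times Clifford multiplication by the outward unit normal $t$. By the inductive construction in Section \ref{Dirac_Rn}, the Clifford matrices $\widetilde{E}_1,\dots,\widetilde{E}_{n+1}$ for $\RR^{n+1}$ may be taken with $\widetilde{E}_j=E_j$ for $j\leq n$ and $\widetilde{E}_{n+1}=-i\omega$, where $\omega=i^rE_1\cdots E_n$ is the grading operator on $\CC^{2^r}=\Delta^+\oplus\Delta^-$. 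Writing $t=(x,t_{n+1})\in S^n$ with $x\in\RR^n$, the grading operator at $t$ therefore becomes
\[ g(t) \;=\; t_{n+1}\,\omega \;+\; \sigma(x), \qquad \sigma(x)=i\sum_{j=1}^n x_jE_j. \]
Since each $E_j$ is off-diagonal, $\sigma(x)$ restricts to a map $\Delta^+\to\Delta^-$ with adjoint $\sigma(x)^*\colon\Delta^-\to\Delta^+$, and one has $\sigma(x)^*\sigma(x)=\sigma(x)\sigma(x)^*=|x|^2 I$.

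Solving $g(t)v=v$ for $v=(v_+,v_-)$ in block form gives
\[ v_- \;=\; \frac{1}{1+t_{n+1}}\,\sigma(x)\,v_+ \quad (t_{n+1}>-1), \qquad v_+ \;=\; \frac{1}{1-t_{n+1}}\,\sigma(x)^*\,v_- \quad (t_{n+1}<1). \]
These formulas yield smooth vector bundle isomorphisms
\[ \phi_+\colon S^n_+\times\Delta^+\;\xrightarrow{\;\cong\;}\;S^+\big|_{S^n_+}, \qquad \phi_-\colon S^n_-\times\Delta^-\;\xrightarrow{\;\cong\;}\;S^+\big|_{S^n_-}. \]
Identifying each hemisphere with $B^n\subset\RR^n$ by orthogonal projection onto the first $n$ coordinates, the equator $t_{n+1}=0$ corresponds to $S^{n-1}=\partial B^n$, on which $\phi_+(x,v_+)=(v_+,\sigma(x)v_+)$ and $\phi_-(x,v_-)=(\sigma(x)^*v_-,v_-)$.

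On the equator the identity $\phi_+(x,v_+)=\phi_-(x,\sigma(x)v_+)$ shows that the transition from the $\Delta^+$-trivialization to the $\Delta^-$-trivialization is precisely multiplication by $\sigma(x)$. This is exactly the clutching datum defining $c(\RR^n\times\Delta^+,\RR^n\times\Delta^-,\sigma)$, so $S^+$ is canonically isomorphic to that bundle. The only point requiring care is fixing the identification $\widetilde{E}_{n+1}=-i\omega$ so that the grading of the boundary spinor bundle is compatible with the $\Delta^+\oplus\Delta^-$ decomposition dictated by $\omega$; once this normalization is made the remainder of the argument is a direct block-matrix eigenvalue computation.
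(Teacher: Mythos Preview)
Your proof is correct and takes a more direct route than the paper's. You solve the eigenvector equation for the grading operator $g(t)=t_{n+1}\omega+\sigma(x)$ explicitly in block form, obtaining smooth trivializations of $S^+$ by $\Delta^+$ over the upper hemisphere and by $\Delta^-$ over the lower, and then read off the equatorial transition as $\sigma(x)$. The paper instead first homotopes $S^+$ to a bundle $V$ whose fiber is already constantly $\Delta^+$ on the upper hemisphere (by doubling the polar angle $\theta$ on the lower hemisphere), and then produces a unitary $g(x)=\cos\theta+\sin\theta\,E_{n+1}c(v)$ conjugating the modified grading $a(x)$ to $-iE_{n+1}$; evaluating this unitary on the equator recovers $ic(v)=\sigma(v)$ as the clutching function. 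Your linear-algebraic approach avoids both the homotopy and the conjugation trick, at the mild cost of checking that the $\pm 1$ eigenspaces of $g(t)$ have equal rank (which follows from $g(t)^2=I$ and $\mathrm{tr}\,g(t)=0$). The paper's argument, while more roundabout, has the feature that the trivializing map is unitary throughout, whereas your $\phi_\pm$ are vector bundle isomorphisms but not fiberwise isometries; for the statement at hand this makes no difference.
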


\begin{proof}
By definition, the positive spinor bundle $S^+$ of $S^n$ is the subbundle of $S^n\times \Delta$ whose fiber at $x\in S^n$ is the $+1$ eigenspace of the matrix 
\[ ic(x)=\sum_{j=1}^{n+1} ix_jE_j\in M_{2^r}(\CC) \qquad x=(x_1,\dots,x_{n+1})\in S^n\]
We use polar coordinates centered at $p_\infty$.
For $v=(v_1,\dots,v_n)\in S^{n-1}$ and $\theta\in [0,\pi]$ let 
\[ x_1 = v_1\sin{\theta},\;\dots\;, x_n=v_n\sin{\theta}, \;x_{n+1} = -\cos{\theta} \qquad x\in S^n\]
so that
\[ ic(x) =  -\cos{\theta}\cdot iE_{n+1} + \sin{\theta}\cdot ic(v)\in M_{2^r}(\CC)\]
The positive spinor bundle $S^+$ is homotopic and hence isomorphic to the vector bundle $V$ whose fiber on the upper hemisphere is $\Delta^+$, and at a point $x\in S^n_-$ of the lower hemisphere is the $+1$ eigenspace of 
\[ a(x) =  -\cos{(2\theta)}\cdot iE_{n+1} + \sin{(2\theta)}\cdot ic(v)\in M_{2^r}(\CC)\]
On the lower hemisphere define the unitary matrices
\[ g(x):=\cos{(\theta)}  + \sin{(\theta)}\cdot E_{n+1}c(v)\qquad x\in S^n_-\]
Note that $g(p_\infty)=I$, while on the equator 
$g(x)=E_{n+1}c(v)$.

Since $v\perp e_{n+1}$ we have $E_{n+1}c(v)=-c(v)E_{n+1}$ and therefore $g(x)E_{n+1}=E_{n+1}g(x)^{-1}$. We get
\begin{align*}
 a(x) 
&= -iE_{n+1}\cdot \left(\cos{(2\theta)}  + \sin{(2\theta)}\cdot E_{n+1}c(v)\right)\\
&=-iE_{n+1}\cdot g(x)^2\\
&=g(x)^{-1} \cdot (-iE_{n+1}) \cdot g(x)
\end{align*}
It follows that the linear map $g(x)\;\colon\;\Delta\to \Delta$ maps the $+1$ eigenspace of $a(x)$ to the  $+1$ eigenspace of $-iE_{n+1}$. 
The $+1$ eigenspace of $a(x)$ is the fiber of $V$, and the $+1$ eigenspaces of $-iE_{n+1}$ is $\Delta^-$.
Thus, on the lower hemisphere $g(x)$ maps the fiber of  $V$ to $\Delta^-$.
Therefore $V$ is isomorphic to the vector bundle on $S^n$ obtained by taking its restriction to the upper hemisphere, which is $V|S^n_+=S^n_+\times \Delta^+$, and clutching it to the trivial bundle $S^n_-\times \Delta^-$ on the lower  hemisphere via the isomorphism at the equator (where $\theta=\pi/2$ and $x=v$) given by
\[ g(v) = E_{n+1}c(v)\;\colon\; \Delta^+\to \Delta^- \qquad v\in S^{n-1}=S^n_+\cap S^n_-\]
$E_{n+1}$ acts as $iI$ on $\Delta^-$ and the formula simplifies to
\[ g(v) = ic(v)\;\colon\; \Delta^+\to \Delta^-\]
\end{proof}

Hence, clutching the ``obvious'' generator of $K^0(\RR^n)$ gives the positive spinor bundle of $S^n$.
Clutching the {\em conjugate} of the obvious generator gives the Bott generator vector bundle. 
In summary, the ``correct'' Bott generator $\beta_n\in K^0(\RR^n)$ is the conjugate of  the ``obvious'' choice $[\RR^n\times \Delta^+,\RR^n\times \Delta^-,\sigma]$.
\footnote{$\sigma$ is the symbol of the Dirac operator. It is not entirely surprising that the Bott generator is dual to the symbol of the Dirac operator.}
The ``correct'' Bott generator of $K^0(\RR^n)$ has the property that $\mathrm{ch}(\beta_n)[\RR^n]=1$.
Note that in this formula the Chern character $\mathrm{ch}(\beta_n)$ is an element in cohomology with compact supports.

Our choice of Thom class in section \ref{Thom} is based on the above observations.
In addition, for a complex vector bundle with its Spin$^c$ structure determined as in section \ref{complex},
our Thom class as in section \ref{Thom} is equal to the Thom class obtained via algebraic geometry as in \cite{BoSe}.

\bibliographystyle{abbrv}
\bibliography{MyBibfile}





\end{document}